\documentclass[final,onefignum,onetabnum]{myclass}

\usepackage{amsfonts}
\usepackage{graphicx}
\usepackage{epstopdf}
\usepackage{algorithmic}
\usepackage{booktabs}
\ifpdf
  \DeclareGraphicsExtensions{.eps,.pdf,.png,.jpg}
\else
  \DeclareGraphicsExtensions{.eps}
\fi
\usepackage{dsfont}

\usepackage{amsmath,amssymb}
\usepackage{enumitem}
\setlist[enumerate]{leftmargin=.5in}
\setlist[itemize]{leftmargin=.5in}

\setlist[itemize]{topsep=2pt,itemsep=2pt,parsep=0pt}
\setlist[enumerate]{topsep=2pt,itemsep=2pt,parsep=0pt}

\newsiamremark{remark}{Remark}
\newsiamremark{hypothesis}{Hypothesis}
\crefname{hypothesis}{Hypothesis}{Hypotheses}
\newsiamthm{claim}{Claim}
\newsiamremark{fact}{Fact}
\crefname{fact}{Fact}{Facts}

\newcommand{\JS}[1]{{\color{teal} #1}}

\usepackage{tikz}

\newcommand{\GP}{\mathrm{GP}}

\headers{Last-Coverage Probabilities in Biased Walks}{K. Idnani}

\title{Last-Coverage Probabilities of Random Walks on Cycle Graphs for Symmetric, Biased, and Time-Dependent Walks}

\author{Kiyan Idnani\thanks{University of Pennsylvania, Philadelphia, PA, USA
  (\email{idnanik@sas.upenn.edu}).}}

\dedication{\small\textit{Project advisor: Jad Salem\thanks{United States Naval Academy, Annapolis, MD, USA.}}}

\usepackage{amsopn}

\ifpdf
\hypersetup{
  pdftitle={Last-Coverage Probabilities of Random Walks on Cycle Graphs},
  pdfauthor={K. Idnani}
}
\fi

\begin{document}

\maketitle

\begin{abstract}

This paper studies the last-coverage vertex of a nearest-neighbour random walk on a cycle: the vertex most likely to be the final one discovered. We show that the visited vertices always form a single contiguous arc of the ring, a property we call the Connected Arc Property. This lets the walk be replaced by a two-state time-inhomogeneous Markov chain, which we call the interval-expansion representation, in which each expansion step has a probability given by the classical gambler's ruin formula. Within this framework we prove that the last-coverage probability of node $k$ in an $n$-node cycle is $(1-\rho)\rho^{\,n-1-k}/(1-\rho^{\,n-1})$, where $\rho = (1-p)/p$ and $p$ is the clockwise transition probability. Taking $p \to 1/2$ recovers the uniform distribution, giving an independent derivation of a classical result. For a walk with a deterministic time-varying bias we give an exact series representation for the last-coverage probabilities, together with a finite truncation that serves as a computable, error-bounded approximation. All results are validated by Monte Carlo simulation, and a sinusoidal schedule is examined in detail. Finally, we compare generalized Petersen graphs with Erd\H{o}s--R\'enyi graphs of matching size, finding that structural symmetry is associated with more uniform last-coverage distributions.
\end{abstract}

\section{Introduction}\label{introduction}

\subsection{Motivation and Background}\label{motivation-and-background}

Consider a security robot that patrols a circular corridor by repeatedly
choosing, at each decision point, whether to continue in the current
direction or to reverse course. In practice such a patrol is rarely
perfectly symmetric. As such, a central operational question arises: which location is
most likely to be reached last?

While the last-coverage problem can in principle be posed on any graph,
the cycle graph offers the simplest setting in which it can be solved
exactly, and it is the natural starting point before turning to more
general graphs, as we do in Section~\ref{open-problems-and-future-directions}.
The mathematical setting is the cycle graph $C_n$, a graph consisting of
$n$ vertices arranged in a ring, each connected by an edge to its two
immediate neighbours. A random walk on $C_n$ is a sequence of random
variables $\{X_i : i = 0, 1, \ldots\}$ such that $X_i$ is a vertex of
$C_n$ and $X_{i+1}$ is a neighbour of $X_i$ for all $i$. The cover time
$T_{cover}$ is the first moment at which every vertex has been visited.

The cover time of random walks on graphs has been studied extensively.
The work of Aleliunas, Karp, Lipton, Lov\'asz, and Rackoff
established that random walks cover all vertices efficiently
\cite{aleliunas1979random}, and the subject is now treated
comprehensively in Aldous and Fill and in Levin and Peres
\cite{aldous2002reversible, levin2026markov}. For the symmetric walk on $C_n$, the expected
cover time is exactly $\binom{n}{2}=n(n-1)/2$, and hence of order $n^2$
steps \cite{aleliunas1979random,levin2026markov,aldous2002reversible}.

The present paper focuses on a complementary and less-studied object:
the identity of the last vertex reached. This is the vertex $L$ that the
walk steps onto at the moment of complete coverage, that is, the vertex
$X_T$ discovered for the first time at time $T_{cover}$. The central goal is to
determine $P(L = k)$ for each non-starting vertex $k$. Last-coverage
distributions have been studied for unbiased random walks on cycles and
on complete graphs
\cite{lovasz1993note,buchanan2022last,aleliunas1979random}, and appear
implicitly in applied settings such as robotic coverage
\cite{choset2001coverage}, token-passing protocols
\cite{donnan2005token}, and cover-time bounds for general Markov chains
\cite{avin2007cover}. However, exact last-coverage results for biased or
time-varying walks, even on the simplest graphs, remain sparse, which is
the gap this paper addresses.

Rather than treating the unbiased walk as the base case and the biased
walk as a generalisation of it, we solve the
biased walk directly, and then recover the unbiased result as a limiting
case of the biased formula. This yields a second, independent derivation
of the classical uniform last-coverage distribution, and it is the
natural order of presentation once the interval-expansion framework is
in hand, since the biased formula falls from the unbiased case.

\subsection{Research Questions}\label{research-questions}

This paper addresses three related questions, each corresponding to a
different type of walk on $C_n$:

\begin{itemize}
\item
  \emph{Biased walk.} When one direction is consistently more probable
  than the other, how does this directional preference shift the
  last-coverage distribution, and can an exact formula be derived?
\item
  \emph{Unbiased walk.} Can the known uniform last-coverage
  distribution be recovered as a limiting case of the biased-walk
  formula, confirming the validity of that formula and giving an
  alternative proof of the classical result?
\item
  \emph{Time-varying bias.} When the directional preference follows a
  deterministic schedule that changes at each step, can the exact
  last-coverage probabilities still be computed, and if not in closed
  form, can they be approximated with a controllable error?
\end{itemize}

\subsection{Main Contributions}\label{main-contributions}

\begin{itemize}
\item
  Theorem~\ref{Theorem-3.1} (Connected Arc Property): the set of vertices
  visited by the walk always forms a single contiguous stretch of the
  ring. This structural fact is the foundation for all that follows.
\item
  Theorem~\ref{theorem-3.6} (Interval Expansion Representation): the
  coverage process is fully described by recording the direction of each
  new discovery as the visited stretch grows one step at a time, and the
  probability of each such directional sequence is a product of
  gambler's ruin exit probabilities.
\item
  Theorem~\ref{Theorem-4.1} (Biased Walk Formula): for fixed
  $p \neq 1/2$, representing the boundary expansions as a two-state
  time-inhomogeneous Markov chain -- whose states record whether the
  most recent discovery was to the left or to the right, and whose
  transition probabilities evolve as the arc grows -- yields an explicit
  closed form for the last-coverage probability of every vertex.
\item
  Lemma~\ref{Lemma-4.2} (Continuity at $p = 1/2$): $P_p(L=k)$, viewed as
  a function of the bias $p$, is continuous at $p = 1/2$, even though
  the closed form of Theorem~\ref{Theorem-4.1} is only directly defined
  for $p \neq 1/2$.
\item
  Corollary~\ref{Corollary-4.3} (Unbiased Walk as a Limiting Case):
  combining the two previous results, the $p \to 1/2$ limit of the
  biased formula recovers the classical uniform distribution
  $P(L=k)=1/(n-1)$ for $k\in\{1,\ldots,n-1\}$, independently confirming
  \cite{aleliunas1979random,buchanan2022last,lovasz1993note}.
\item
  Corollary~\ref{corollary-4.5} (Exact Series and Approximation
  Formula): for any deterministic time-varying bias, the last-coverage
  probabilities are given exactly by a convergent but generally infinite
  series over admissible expansion sequences. Truncating at a finite
  horizon gives a computable approximation whose error is bounded
  explicitly and can be made arbitrarily small.
\end{itemize}

Section~\ref{background} covers the necessary background.
Section~\ref{the-interval-expansion-framework} develops the
interval-expansion framework. Section~\ref{last-coverage-results}
derives the three main results, treating the biased walk first, the
unbiased walk second as a limiting case, and the time-varying walk
third. Section~\ref{monte-carlo-simulation} presents Monte Carlo
simulations, and Section~\ref{applications-open-problems-and-conclusion}
discusses applications and open problems.

\section{Background}\label{background}

\subsection{Cycle Graphs and Walk Models}\label{cycle-graphs}

The cycle graph $C_n$ consists of $n$ vertices labelled
$0, 1, \ldots, n - 1$ arranged in a ring, with vertex $i$ adjacent to
vertices $i - 1$ and $i + 1$, arithmetic taken modulo $n$. The
degenerate cases $n = 1$ and $n = 2$ are excluded throughout.

\begin{center}
\begin{tikzpicture}[scale=0.85]

\tikzstyle{vertex}=[draw, circle, color = black, text opacity = 1, inner sep = 1.4pt]

\foreach \s in {0,...,4}{
    \node[vertex] (v\s) at ({90 + 360/5 * (-\s)}:1.5cm){$\s$};
}
\foreach \s [evaluate = \r using {int(mod(\s+1,5))}] in {0,...,4}{
    \draw (v\s) -- (v\r);
}

\end{tikzpicture}
\end{center}

\begin{definition}[Connected arc]
A \emph{connected arc} in $C_n$ is a set of vertices
$S = \{a, a+1, \ldots, b\} \pmod{n}$ for some $a, b$; that is, a
contiguous, gap-free stretch of the cycle. For example, the vertices
$\{3, 4, 5, 6\}$ on a ten-vertex ring form a connected arc.
\end{definition}

A random walk on $C_n$ starts at vertex $0$ and at each step moves to
one of its two neighbours, chosen according to a specified probability
rule. This paper analyses two models.

In the \emph{biased (constant drift) walk}, each step goes clockwise
with probability $p$ and counterclockwise with probability $q = 1 - p$,
where $p$ is a fixed constant in $(0,1)$, the same at every step and
every vertex. When $p > 1/2$ the walk drifts clockwise and when
$p < 1/2$ it drifts counterclockwise. The special value $p = 1/2$,
treated in Section~\ref{the-unbiased-walk} as a limiting case of this
model, is called the \emph{unbiased} or \emph{symmetric} walk. For a symmetric walk neither
direction is favoured, the walk is reversible, and the uniform
distribution on the vertex set is its unique stationary distribution.

In the \emph{time-varying drift walk}, the step at time $t$ goes
clockwise with probability $p(t)$, where the schedule
$p(0), p(1), p(2), \ldots$ is deterministic and known in advance but
need not be constant. This model subsumes the previous ones.

In both models, both directions remain possible at every step, since
$p(t)$ lies strictly between $0$ and $1$ throughout. For the
constant-bias model this alone makes the walk an irreducible Markov
chain on the finite state space $C_n$, so it visits every vertex in
finite time with probability $1$. For the time-varying model this is no
longer automatic (a schedule could in principle decay toward $0$ or $1$
fast enough to make the walk directionally frozen), so throughout the
paper we additionally assume the schedule is uniformly bounded away from
the endpoints, $c \le p(t) \le 1-c$ for some constant $c>0$. This is
exactly the condition used in Theorem~\ref{theorem-3.6} and
Corollary~\ref{corollary-4.5} to guarantee almost-sure coverage in finite
time. For thorough treatments of random walks on graphs see
\cite{levin2026markov,aldous2002reversible}.

\subsection{Cover Time and the Last-Coverage Vertex}\label{cover-time-and-the-last-coverage-vertex}

The cover time is the first instance at which every vertex has been
visited,
\[
T_{\mathrm{cover}} = \min \{ t \geq 0 : \text{every vertex has been visited by time } t \},
\]
and the last-coverage vertex $L$ is the vertex first visited at time
$T_{\mathrm{cover}}$, that is, the vertex whose initial discovery
coincides with complete coverage. Since the walk starts at vertex $0$,
that vertex is visited immediately and can never be the last discovery,
so $L \in \{1,2,\ldots,n-1\}$. The primary objective of
Sections~\ref{the-interval-expansion-framework}--\ref{last-coverage-results}
is to determine $P(L = k)$ for each such $k$; the cover time itself
reappears in the analysis of time-varying walks in
Section~\ref{the-time-varying-bias-walk}.

\subsection{The Gambler's Ruin Formula}\label{the-gamblers-ruin-formula}

A classical result of probability theory, the gambler's ruin formula,
plays a central role throughout this paper. The setting is a
nearest-neighbour random walk on the integers $\{0, 1, \ldots, N\}$,
where the state starts at position $x$, moves right with probability $p$
at each step, and moves left with probability $q = 1 - p$. Play ends
when the state reaches either $0$ (ruin) or $N$ (success). Let $H(x)$
denote the probability of reaching $N$ before $0$. For completeness we
re-derive the biased case; the result is discussed in detail in Feller
and in Levin and Peres \cite{feller1971introduction,levin2026markov}.

\begin{theorem}[Biased gambler's ruin, \cite{feller1971introduction,levin2026markov}]
For a nearest-neighbour walk on $\{0, 1, \ldots, N\}$ with right-step
probability $p \neq 1/2$ and left-step probability $q = 1 - p$, the
probability of reaching $N$ before $0$, starting from position $x$, is
$H(x) = \bigl(1 - (q/p)^x\bigr) / \bigl(1 - (q/p)^N\bigr)$.
\end{theorem}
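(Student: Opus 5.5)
We prove the formula by first-step analysis, which turns $H$ into the solution of a linear second-order recurrence with two boundary conditions.

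\emph{Setting up the recurrence.} Conditioning on the first step from an interior state $x \in \{1, \ldots, N-1\}$ gives
\[
H(x) = p\,H(x+1) + q\,H(x-1).
\]
The boundary conditions are $H(0) = 0$ and $H(N) = 1$, since the game stops immediately at either endpoint. Before solving, we check that $H$ is well defined. From any state, $N$ consecutive right steps end the game, and this happens with probability at least $p^N > 0$. So the game ends almost surely, with a geometrically dominated stopping time. This justifies the conditioning and shows that $H(x)$ is a genuine probability.

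\emph{Solving by telescoping.} Let $D(x) = H(x+1) - H(x)$ for $0 \le x \le N-1$. Writing $H(x) = (p+q)H(x)$ and rearranging the recurrence gives
\[
p\,\bigl(H(x+1) - H(x)\bigr) = q\,\bigl(H(x) - H(x-1)\bigr),
\]
that is, $D(x) = \rho\, D(x-1)$ with $\rho = q/p$. Hence $D(x) = \rho^x D(0)$, and $D(0) = H(1)$ because $H(0) = 0$.

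Summing the increments from $0$ to $x-1$ gives
\[
H(x) = H(1) \sum_{j=0}^{x-1} \rho^j = H(1)\,\frac{1 - \rho^x}{1 - \rho}.
\]
This geometric-sum formula is valid precisely because $p \neq 1/2$, so $\rho \neq 1$. The boundary condition $H(N) = 1$ then fixes
\[
H(1) = \frac{1 - \rho}{1 - \rho^N}.
\]
Substituting back yields $H(x) = (1 - \rho^x)/(1 - \rho^N)$, as claimed.

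\emph{Uniqueness.} The argument above produces this formula only if the true $H$ is the unique solution of the boundary-value problem. This follows because the recurrence together with $H(0)$ and $H(1)$ determines every value by forward recursion. The family of solutions with $H(0) = 0$ is therefore one-dimensional, and the condition at $N$ pins it down. Equivalently, any two solutions differ by a function satisfying the recurrence that vanishes at both ends, and such a function is identically zero. The only point that needs care is this well-posedness step (almost-sure termination), which we have already handled; the rest is routine algebra.
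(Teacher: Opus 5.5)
Your proof is correct and follows the paper's route: first-step analysis gives $H(x)=pH(x+1)+qH(x-1)$ with $H(0)=0$, $H(N)=1$. The only difference is that you solve the recurrence by telescoping the increments, $D(x)=\rho\,D(x-1)$, rather than by the paper's characteristic-root ansatz $H(x)=A+B(q/p)^x$ from $(\lambda-1)(p\lambda-q)=0$. One small remark: the first-step recurrence is just the Markov property at time $1$, and your telescoping derivation already gives uniqueness directly, so neither actually needs almost-sure termination. That step is harmless, but it is not the crux you describe.
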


\begin{proof}
Conditioning on the first step gives the recurrence
$H(x) = p H(x + 1) + q H(x - 1)$, subject to $H(0) = 0$ and $H(N) = 1$,
since from position $x$ the state moves right with probability $p$ or
left with probability $q$. Rearranging gives the second-order linear
recurrence with constant coefficients
$p\,H(x+1) - H(x) + q\,H(x-1) = 0$. Substituting the trial solution
$H(x) = \lambda^x$ yields the characteristic equation
$p\lambda^2 - \lambda + q = 0$, which factors as
$(\lambda - 1)(p\lambda - q) = 0$, with distinct roots $\lambda = 1$ and
$\lambda = q/p$; the general solution is therefore
$H(x) = A + B (q/p)^x$. Applying $H(0) = 0$ gives $A = -B$, and
$H(N) = 1$ then gives $B\bigl(1 - (q/p)^N\bigr) = 1$. Substituting,
\[
H(x) = \frac{1 - (q/p)^x}{1 - (q/p)^N}. \qed
\]
\end{proof}

For the symmetric case $p = q = 1/2$ this formula cannot be obtained by
direct substitution, since setting $p = q$ yields the indeterminate form
$0/0$; taking the limit $p \to 1/2$ (equivalently $q/p \to 1$) via
L'H\^opital's rule gives $H(x) = x/N$. Lemma~\ref{Lemma-4.2} carries out
exactly this limit for the last-coverage probability itself, not just
for a single exit probability.

These formulas underlie all subsequent analysis. Given a visited arc,
the state moves within the arc until it exits through one of its two
endpoints and thereby discovers a new vertex. The probability that this
discovery occurs at the right boundary rather than the left is therefore
an instance of gambler's ruin, with the current position of the state
corresponding to $x$ and the two endpoints of the arc acting as
absorbing barriers.

\section{The Interval-Expansion Framework}\label{the-interval-expansion-framework}

This section develops the central analytical tool of the paper. Section~\ref{the-connected-arc-property}
proves the Connected Arc Property, the structural fact that the visited
set always forms a single contiguous arc.
Sections~\ref{boundary-counters}--\ref{exit-probabilities} use it to
reduce the coverage process to a sequence of left and right boundary
expansions, each governed by a gambler's ruin exit probability,
culminating in Theorem~\ref{theorem-3.6}.

\subsection{The Connected Arc Property}\label{the-connected-arc-property}

\begin{theorem}[Connected Arc Property] \label{Theorem-3.1}
Let $(X_t)$ be a nearest-neighbour random walk on $C_n$ starting at
vertex $0$. For every time $t \geq 0$, the set of vertices visited up to
time $t$, denoted $V_t$, forms a connected arc of the cycle. That is,
$V_t$ is either a proper sub-arc (a path) or all of $C_n$; equivalently,
the visited vertices always constitute a single contiguous stretch of
the ring.
\end{theorem}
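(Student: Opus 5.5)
The plan is induction on $t$, using only that each step moves to a neighbour of the current vertex. The invariant I will carry is slightly stronger than the statement: $V_t$ is a connected arc \emph{and} $X_t\in V_t$. The second part is trivial, since $X_t$ is visited by time $t$, but it is what makes the induction close. For the base case, $V_0=\{0\}$ is the arc with $a=b=0$, and $X_0=0\in V_0$.

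For the inductive step, assume $V_t$ is a connected arc containing $X_t$. We have $V_{t+1}=V_t\cup\{X_{t+1}\}$, where $X_{t+1}\in\{X_t-1,X_t+1\}$. There are three cases.
\begin{itemize}
\item If $X_{t+1}\in V_t$, then $V_{t+1}=V_t$ and nothing changes.
\item If $V_t=C_n$, the first case always applies.
\item Otherwise $V_t=\{a,a+1,\ldots,b\}$ is a proper arc, and $X_{t+1}\notin V_t$ is adjacent to the vertex $X_t\in V_t$.
\end{itemize}

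The key observation for the third case concerns where a proper arc can have outside neighbours. An interior vertex $a<v<b$ (indices read along the arc) has both neighbours $v\pm1$ inside the arc. So a neighbour outside $V_t$ can only arise at an endpoint: either $X_t=b$ and $X_{t+1}=b+1$, or $X_t=a$ and $X_{t+1}=a-1$. In the first case $V_{t+1}=\{a,\ldots,b+1\}$, and in the second $V_{t+1}=\{a-1,\ldots,b\}$, both connected arcs. If $b+1\equiv a-1 \pmod n$, i.e.\ $|V_t|=n-1$, the new set is all of $C_n$, which is the other permitted outcome. This completes the induction.

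The only delicate point, and the place I would be careful, is the bookkeeping modulo $n$. I would write the arc as $\{a,a+1,\ldots,a+m-1\}$ with $1\le m\le n-1$, so that the elements $a+j$, $0\le j\le m-1$, are distinct mod $n$. Then for $1\le j\le m-2$ both $a+j\pm1$ are in the arc. The only vertices adjacent to the arc but outside it are $a-1$ and $a+m$, and these coincide exactly when $m=n-1$.

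When $m=1$ the single vertex is both endpoints, and the argument still holds: either step extends the arc to length $2$, given $n\ge3$ as assumed. I would also note that the argument is purely pathwise. It uses no property of the transition probabilities, so it applies verbatim to the biased and time-varying models.
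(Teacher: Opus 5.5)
Your proof is correct and follows the same route as the paper: induction on $t$, using the fact that interior vertices of a proper arc have both neighbours inside it, so a new discovery can only extend the arc at an endpoint. Your explicit indexing $\{a,\ldots,a+m-1\}$ with $1\le m\le n-1$, together with your separate handling of $V_t=C_n$ and $m=n-1$, makes precise the modular bookkeeping that the paper leaves implicit.
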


\begin{proof}
We proceed by induction on $t$. At time $0$ only vertex $0$ has been
visited, so $V_0 = \{0\}$, a single vertex and hence a connected arc.

Suppose that at some time $t \geq 0$ the visited set $V_t$ forms a
connected arc with left endpoint $v_L$ and right endpoint $v_R$. Because
the walk is nearest-neighbour, $X_{t+1}$ must be a neighbour of $X_t$.
If $X_{t+1}$ belongs to $V_t$, then $V_{t+1} = V_t$, which is already a
connected arc. Otherwise $X_{t+1}$ is unvisited. Since $V_t$ is a
connected arc, every vertex in its interior (every vertex other than
$v_L$ and $v_R$) has both of its cycle-neighbours already inside $V_t$.
Consequently no interior vertex is adjacent to an unvisited vertex, and
a new discovery can occur only by stepping off one of the two endpoints.
Since $X_t$ belongs to $V_t$ and $X_{t+1}$ is unvisited, $X_t$ must be
either $v_L$ or $v_R$, and $X_{t+1}$ is the unique unvisited neighbour
of that endpoint. The set $V_{t+1} = V_t \cup \{X_{t+1}\}$ is therefore
obtained by appending one vertex to an endpoint of the arc, which
preserves the connected arc structure. In both cases $V_{t+1}$ is a
connected arc, so by induction the property holds for all $t \geq 0$.
\end{proof}

\begin{remark}
The argument uses only that the walk is nearest-neighbour on $C_n$; it
makes no reference to the step probabilities, so
Theorem~\ref{Theorem-3.1} holds unchanged for the biased and
time-varying walks of Section~\ref{cycle-graphs}. This is what allows
the same Connected Arc Property to underlie every result in
Section~\ref{last-coverage-results}. The inductive step also uses
$n \geq 3$: only then does an interior vertex of a proper sub-arc have
both cycle-neighbours inside the arc, making the two candidates for an
unvisited $X_{t+1}$ exhaustive.
\end{remark}

Theorem~\ref{Theorem-3.1} has four immediate consequences. First, the
visited region at time $t$ is completely described by two non-negative
integers, and no further information about the walk history is needed.
Second, new discoveries can occur only at the two endpoints of the
current arc. Third, coverage is complete precisely when the arc has
grown to encompass all $n$ vertices. Fourth, and most consequentially,
the coverage process admits a reduced description in which only the two
boundary positions matter, which collapses the infinitely many sample
paths with $L=k$ into finitely many expansion sequences. The remainder
of this section makes each of these precise.

\subsection{Boundary Counters and Expansion Sequences}\label{boundary-counters}

Given the random walk $(X_t)$ starting at vertex $0$, define two
boundary counters: $A_t$, the number of vertices discovered to the left
of the starting point, and $B_t$, the number discovered to the right. By
Theorem~\ref{Theorem-3.1} the visited region at time $t$ is the arc from
position $-A_t$ to position $+B_t$ (taken modulo $n$), which contains
$A_t + B_t + 1$ vertices including the starting vertex. Full coverage
occurs when $A_t + B_t = n - 1$, and each new discovery increments
exactly one of $A_t$ or $B_t$ by exactly $1$.

\label{expansion-sequences}%
Since each discovery extends the arc in exactly one direction, the
entire coverage history can be recorded as an ordered sequence of $n-1$
directions,
\[
(Y_1,\ldots,Y_{n-1}), \qquad \text{where } Y_i \in \{L,R\} \text{ for all } i,
\]
where $L$ corresponds to a counterclockwise move and $R$ to a clockwise
move. The last-coverage vertex is determined entirely by this sequence:
if the final expansion $Y_{n-1}$ is $R$, the last vertex discovered is
the one immediately to the right of the right boundary at step $n-2$,
and if $Y_{n-1}$ is $L$, it is the one immediately to the left of the
left boundary. Each possible last vertex $k$ therefore corresponds to a
specific set of direction sequences.

\subsection{Exit Probabilities}\label{exit-probabilities}

Between consecutive discoveries, the state moves within the current arc
until it steps off one end. The probability that it exits through the
right end is the exit probability $r(a, b, x)$, where $a$ and $b$ are
the current boundary sizes and $x$ is the state's position within the
arc. This is a direct application of the gambler's ruin formula of
Section~\ref{the-gamblers-ruin-formula} to an interval of length
$a+b+1$ with the state at a re-centred position, the two ends of the arc
serving as absorbing barriers \cite{levin2026markov,feller1971introduction}.
For the biased walk with ratio $\rho = q/p$, and for the unbiased walk
respectively,
\[
r(a, b, x) = \frac{1 - \rho^{\,x+a+1}}{1 - \rho^{\,a+b+2}},
\qquad\qquad
r(a, b, x) = \frac{x + a + 1}{a + b + 2}.
\]

\subsection{The Central Theorem}\label{the-central-theorem}

With the exit probabilities in hand, the coverage process can be
expressed exactly as a product of these probabilities over the sequence
of boundary expansions.

\begin{theorem}[Interval Expansion Representation] \label{theorem-3.6}
Let $(X_t)$ be a random walk on $C_n$ of the biased walk type from
Section~\ref{cycle-graphs} (a homogeneous nearest-neighbour walk with
fixed clockwise probability $p$) starting at vertex $0$, with
$p \in (0,1)$ satisfying $c \leq p \leq 1-c$ for some constant $c > 0$.
(This bound is automatic for any fixed $p\in(0,1)$, taking
$c = \min(p,1-p)$; it is stated in this uniform form because the
identical statement, with $p$ replaced by a schedule $p(t)$, underlies
the time-varying analysis of Section~\ref{the-time-varying-bias-walk},
where the bound is a genuine assumption.) Let $L$ denote the
last-coverage vertex, and for each non-starting vertex $k$ let $H_k$
denote the set of expansion sequences
$(y_1,\ldots,y_{n-1}) \in \{L,R\}^{n-1}$ in which vertex $k$ is the
final discovery. Then
\[
P(L = k) = \sum_{h \in H_k} P(h),
\qquad
P(h) = \prod_{i=1}^{n-1}
r_p\!\left(a^h_i,\,b^h_i,\,x^h_i\right)^{\mathbf{1}\{y_i = R\}}
\bigl(1 - r_p\!\left(a^h_i,\,b^h_i,\,x^h_i\right)\bigr)^{\mathbf{1}\{y_i = L\}},
\]
where $(a^h_i, b^h_i, x^h_i)$ is the state of the arc and walker
immediately before the $i$-th discovery under sequence $h$, and
\[
r_p(a,b,x)
= \frac{1-(q/p)^{\,x+a+1}}{1-(q/p)^{\,a+b+2}},
\qquad q = 1-p,
\]
with the convention $r_{1/2}(a,b,x) = (x+a+1)/(a+b+2)$ in the symmetric
case.
\end{theorem}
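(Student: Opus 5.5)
The proof will combine the Connected Arc Property (Theorem~\ref{Theorem-3.1}) with the strong Markov property at the successive discovery times. Let $\tau_0=0$, and let $\tau_i$ be the time of the $i$-th new discovery for $1\le i\le n-1$. First I would show that every $\tau_i$ is almost surely finite. On a finite irreducible chain with $c\le p\le 1-c$, from any vertex the walk can reach any unvisited vertex within $n$ steps with probability at least $c^{n}$. A geometric-trials argument then gives $P(\tau_{n-1}>mn)\le(1-c^n)^m\to 0$. As a consequence, the direction sequence $(Y_1,\dots,Y_{n-1})$ is almost surely well defined, and since $\{L=k\}$ is the disjoint union over $h\in H_k$ of the events $\{(Y_1,\dots,Y_{n-1})=h\}$, we get $P(L=k)=\sum_{h\in H_k}P\bigl((Y_i)=h\bigr)$. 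It then remains to identify $P\bigl((Y_i)=h\bigr)$ with the stated product.

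Second, I would fix $h$ and check that the state immediately before the $i$-th discovery is determined by $h$. By Theorem~\ref{Theorem-3.1}, after $i-1$ discoveries the visited set is the arc $[-a_i,b_i]$, where $a_i$ and $b_i$ count the $L$'s and $R$'s among $y_1,\dots,y_{i-1}$. At time $\tau_{i-1}$ the walker sits at the vertex just discovered: position $b_i$ if $y_{i-1}=R$, position $-a_i$ if $y_{i-1}=L$, and $0$ if $i=1$. This gives $x^h_i$ explicitly, and it depends only on $h$. Note that the formula for $r_p$ assigns absorbing barriers at $-a-1$ and $b+1$, which are exactly the two unvisited neighbours of the arc. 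Third, I would apply the strong Markov property at $\tau_{i-1}$, a stopping time. Conditional on $\mathcal F_{\tau_{i-1}}$ and on $Y_1=y_1,\dots,Y_{i-1}=y_{i-1}$, the walk after $\tau_{i-1}$ is a fresh biased walk started at $x^h_i$. Moreover, $Y_i=R$ holds exactly when this walk hits $b_i+1$ before $-a_i-1$. Until one of these is hit, the walk never leaves $[-a_i,b_i]$, so the wraparound of the cycle plays no role before absorption. When $a_i+b_i+1=n-1$ the two barriers coincide as vertices of $C_n$, but they are still distinguished by the side from which they are entered, so the line picture remains valid.

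The fourth step is the gambler's ruin translation. Shifting by $a_i+1$ maps the interval $\{-a_i-1,\dots,b_i+1\}$ to $\{0,\dots,N\}$ with $N=a_i+b_i+2$ and start $x^h_i+a_i+1$. The Biased gambler's ruin theorem then gives the conditional probability $r_p(a_i,b_i,x^h_i)$ for $Y_i=R$, and $1-r_p$ for $Y_i=L$; in the symmetric case the theorem's limit form $x/N$ applies instead. Chaining these conditional probabilities over $i=1,\dots,n-1$ yields the product formula.

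The main obstacle is the strong Markov step. One must state carefully that the conditional law depends only on $(a_i,b_i,x_i)$ and not on the rest of the history, and handle the subtlety at the final expansion, where both barriers are the same cycle vertex. I would address this by working on the lifted walk on $\mathbb Z$ and projecting to $C_n$. On $\mathbb Z$ the visited set is an interval $[-A_t,B_t]$, and its discoveries agree with those on $C_n$ up to $\tau_{n-1}$.
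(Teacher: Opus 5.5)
Your proposal is correct and follows essentially the same route as the paper: almost-sure finiteness of coverage, decomposition of $\{L=k\}$ over the expansion sequences in $H_k$, and one gambler's-ruin factor per excursion between consecutive discoveries, multiplied along the sequence. Where your argument differs, it is more careful than the paper's: you justify the product by the strong Markov property at the discovery times $\tau_{i-1}$, whereas the paper only asserts that successive exit events are ``conditionally independent given the arc state''; you make $x_i^h$ explicit as the position of the most recent discovery; you handle the final expansion, where both barriers are the same vertex of $C_n$, via the lift to $\mathbb{Z}$; and you prove termination by an explicit geometric-trials bound rather than by appeal to irreducibility.
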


\begin{proof}
\textbf{Step 1} (Arc characterisation). By Theorem~\ref{Theorem-3.1},
the visited region is always a connected arc determined by the pair
$(A_t, B_t)$, and coverage is complete when $A_t + B_t = n - 1$.

\textbf{Step 2} (Discoveries at endpoints only). A new vertex can be
discovered only by stepping off one end of the current arc. Each
discovery therefore increments exactly one of $A_t$ or $B_t$ by $1$, and
the $n-1$ discovery events correspond exactly to a sequence of $n-1$
left or right expansions $(Y_1,\ldots,Y_{n-1})$.

\textbf{Step 3} (Almost sure termination). Since $p \in (0,1)$, the walk
$(X_t)$ is an irreducible Markov chain on the finite state space $C_n$:
every vertex is reachable from every other in finitely many steps with
positive probability. An irreducible finite-state Markov chain visits
every state infinitely often, and in particular reaches every vertex in
finite time, with probability $1$. Consequently exactly $n-1$ discovery
events occur almost surely, and the expansion sequence is well defined
for almost every realisation. (For the time-varying model of
Section~\ref{the-time-varying-bias-walk}, the same conclusion requires
the uniform bound $c \leq p(t) \leq 1-c$ and the argument given in
Step~2 of the proof of Corollary~\ref{corollary-4.5}.)

\textbf{Step 4} (Probability of an expansion sequence). At each
expansion step $i$, the walker wanders within the current arc until it
exits at one end, with probability $r_p(a^h_i, b^h_i, x^h_i)$ of exiting
to the right and $1 - r_p(a^h_i, b^h_i, x^h_i)$ to the left. Since these
exit events at successive steps are conditionally independent given the
arc state, the probability of a specific sequence
$h = (y_1,\ldots,y_{n-1})$ is the stated product.

\textbf{Step 5} (Summation). For each non-starting vertex $k$, the law
of total probability over the disjoint sequences in $H_k$ gives
$P(L = k) = \sum_{h \in H_k} P(h)$.
\end{proof}

\begin{remark}[Consistency check]
Since exactly one vertex must be discovered last, the sets $H_k$ are
disjoint and their union is the full set $\{L,R\}^{n-1}$, so
$\sum_{k=1}^{n-1} P(L=k) = \sum_{h \in \{L,R\}^{n-1}} P(h) = 1$, as it
must be. The starting vertex is visited at time $0$, so $H_0$ is empty
and $P(L=0) = 0$.
\end{remark}

\begin{remark}[Size of $H_k$ and computational cost] \label{remark-Hk}
If vertex $k$ is the last discovery, then among the first $n-2$
expansion steps there must be exactly $n-k-1$ left expansions and $k-1$
right expansions: the arc's right boundary must reach vertex $k-1$ and
its left boundary must reach vertex $k+1$ the other way around the ring
before the final step, wherever it comes from, closes the gap at $k$.
Since each expansion simply extends one boundary by one vertex, every
ordering of these $n-2$ expansions is admissible, so
$|H_k| = \binom{n-2}{k-1} = \binom{n-2}{n-k-1}$. Consequently
$\sum_{k=1}^{n-1} |H_k| = 2^{n-2}$, and direct enumeration costs
$O(2^n)$ time. Theorem~\ref{Theorem-4.1} below gives a closed form that
avoids this exponential cost entirely, and
Section~\ref{the-time-varying-bias-walk} develops an analogous
approximation for the time-varying case.
\end{remark}

Theorem~\ref{theorem-3.6} thus transforms a problem about a random walk
on a ring into a structured calculation involving only the exit
probabilities and the combinatorics of expansion sequences.
Section~\ref{last-coverage-results} carries out this calculation for
each walk model in turn.

\begin{figure}[ht]
    \centering
    \includegraphics[width=0.8\textwidth]{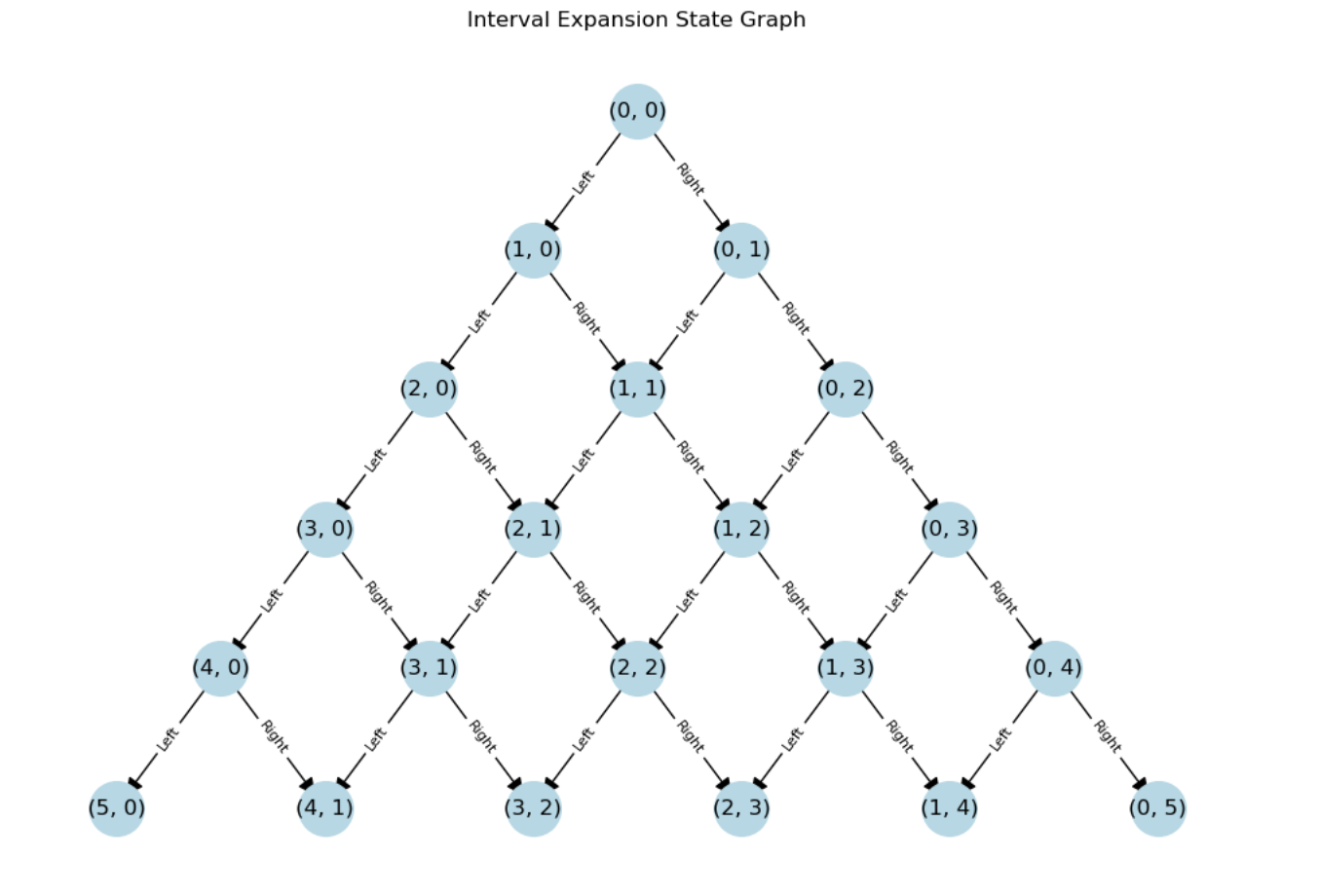}
    \caption{State-space diagram for the expansion sequences, in which each node represents a pair $(a,b)$, the current left and right boundary sizes. Directed edges represent left and right expansion events labelled with the corresponding exit probability.}
    \label{fig:interval-expansion}
\end{figure}

\section{Last-Coverage Results}\label{last-coverage-results}

This section applies the interval-expansion framework to the three walk
models. Section~\ref{the-biased-walk} treats the biased walk, deriving
an explicit closed form by representing the boundary expansions as a
two-state time-inhomogeneous Markov chain.
Section~\ref{the-unbiased-walk} recovers the known uniform distribution
as the $p \to 1/2$ limit of that formula, a limit justified by a
continuity lemma proved along the way.
Section~\ref{the-time-varying-bias-walk} treats the time-varying walk,
establishing an exact series representation together with a computable,
error-controlled approximation.

\subsection{The Biased Walk}\label{the-biased-walk}

At every step the state moves clockwise with probability $p$ and
counterclockwise with probability $q = 1 - p$, where $p$ is a fixed
constant in $(0,1)$, not necessarily $1/2$, the same at every step and
every position. Writing $\rho = q/p$, the exit probability of
Section~\ref{exit-probabilities} is
$r(a, b, x) = (1 - \rho^{\,x+a+1})/(1 - \rho^{\,a+b+2})$. When $\rho < 1$,
equivalently $p > 1/2$, numerator and denominator are both close to $1$
for moderate arc lengths, so the right-exit probability is close to $1$:
a clockwise-biased walk almost always exits the arc to the right.
Conversely, when $\rho > 1$ the right-exit probability approaches $0$.

\begin{theorem}[Last-Coverage Formula for the Biased Walk] \label{Theorem-4.1}
For a biased random walk on $C_n$ with fixed clockwise probability
$p \in (0,1)$, $p \neq 1/2$, the last-coverage probabilities are
\[
P(L = k) = \frac{(1-\rho)\rho^{\,n-1-k}}{1-\rho^{\,n-1}}
\]
for $k=1,\ldots,n-1$, where $\rho = (1-p)/p = q/p$.
\end{theorem}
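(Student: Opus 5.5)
The plan is to avoid summing over the $\binom{n-2}{k-1}$ sequences in $H_k$ from Theorem~\ref{theorem-3.6}. Instead I would use the coarser decomposition the Connected Arc Property (Theorem~\ref{Theorem-3.1}) allows. Lift the walk to $\mathbb{Z}$, so that vertex $k$ corresponds to the two lattice points $k$ and $k-n$, and its neighbours $k-1$ and $k+1$ correspond to $k-1$ and $k+1-n$. Vertex $k$ is the last discovery exactly when the arc grows to contain both $k-1$ and $k+1-n$ before touching $k$ or $k-n$. Such a run has two phases.

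\textbf{Phase 1.} The walk first reaches one of the lattice points $k-1$ or $k+1-n$. Which one comes first is a gambler's ruin problem on the interval $[k+1-n,\,k-1]$, which has $n-2$ steps, started at $0$, i.e.\ at distance $x=n-1-k$ from the left barrier. By the biased gambler's ruin theorem, the walk reaches $k-1$ first with probability
\[
\alpha=\frac{1-\rho^{\,n-1-k}}{1-\rho^{\,n-2}}.
\]
At that moment $k$ is still unvisited.

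\textbf{Phase 2, case $k-1$ reached first.} By the strong Markov property the walk restarts at $k-1$. For $k$ to be last, it must now reach $k+1-n$ before $k$. This is gambler's ruin on $[k+1-n,\,k]$, which has $n-1$ steps, starting one step from the right barrier. The required left exit has probability
\[
1-\frac{1-\rho^{\,n-2}}{1-\rho^{\,n-1}}=\frac{\rho^{\,n-2}(1-\rho)}{1-\rho^{\,n-1}}.
\]

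\textbf{Phase 2, case $k+1-n$ reached first.} Symmetrically, the walk restarts at $k+1-n$ and must reach $k-1$ before $k-n$. This is gambler's ruin on $[k-n,\,k-1]$ starting one step from the left barrier, and the right exit has probability $(1-\rho)/(1-\rho^{\,n-1})$.

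This is exactly the expansion picture of Theorem~\ref{theorem-3.6} compressed: before each discovery the walker sits at the endpoint it just extended, so only the exit probabilities $r$ from an endpoint matter.

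\textbf{Combining the phases.} Summing over the two cases gives
\[
P(L=k)=\frac{1-\rho}{1-\rho^{\,n-1}}\Bigl[\alpha\rho^{\,n-2}+(1-\alpha)\Bigr]
=\frac{1-\rho}{1-\rho^{\,n-1}}\Bigl[1-\alpha\bigl(1-\rho^{\,n-2}\bigr)\Bigr].
\]
Substituting $\alpha$, the bracket collapses to $1-(1-\rho^{\,n-1-k})=\rho^{\,n-1-k}$, which is the claimed formula.

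\textbf{Main obstacle.} I expect the difficulty to lie in careful bookkeeping rather than computation, in three places:
\begin{itemize}
\item[(i)] justifying that "$k$ last" is exactly the disjoint union of the two phase-events, using Theorem~\ref{Theorem-3.1} and almost-sure coverage from Step~3 of Theorem~\ref{theorem-3.6};
\item[(ii)] matching the barrier conventions to the orientation of the gambler's ruin formula ($p$ = rightward = clockwise);
\item[(iii)] the degenerate endpoints $k=1$ and $k=n-1$, and $n=3$.
\end{itemize}
For (iii), when $k=n-1$ the start $0$ already equals $k+1-n$, so $\alpha=0$, and the formula gives $1-\rho^{0}=0$ consistently. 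When $k=1$, $\alpha=1$. I would check these directly. As a sanity check, summing the geometric expression over $k=1,\dots,n-1$ gives $1$.
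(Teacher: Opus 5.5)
Your proof is correct, and it takes a different route from the paper's.

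The paper treats the sequence of expansion directions as a two-state time-inhomogeneous Markov chain. Its transition probabilities $P^i_{x,y}$ are gambler's ruin exits from an endpoint. It proves by induction on $m=A+B$ the closed forms $g_R(A,B)=\rho^A u_1u_B/(u_mu_{m+1})$ and $g_L(A,B)=\rho^m u_1u_A/(u_mu_{m+1})$, where $u_i=1-\rho^i$. It then adds the two at $(A,B)=(n-k-1,k-1)$.

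You instead condition on which neighbour of $k$ is hit first and use the strong Markov property at that single hitting time. Three gambler's ruin evaluations and one line of algebra then suffice. Your two cases are exactly the paper's two terms: ``$k-1$ first'' equals $g_L(n-k-1,k-1)$, and ``$k+1-n$ first'' equals $g_R(n-k-1,k-1)$. Indeed, running your argument at a general pair (hit $B$ before $-A$, then $-A$ before $B+1$, or the mirror image) gives the paper's formulas for $g_L$ and $g_R$ directly, with no induction.

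Your route also uses only the hitting function $H(x)$, so it applies verbatim at $p=1/2$ with $H(x)=x/N$. Both phase-2 probabilities are then $1/(n-1)$ regardless of $\alpha$. This proves Corollary~\ref{Corollary-4.3} directly, without going through Lemma~\ref{Lemma-4.2}; it is the classical argument for the cycle.

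What the paper's chain offers in exchange is the full law of the direction sequence $(Y_1,\ldots,Y_{n-1})$. That would handle path functionals of the expansion process, such as the number of direction switches, which a two-hitting-time decomposition does not address. For the marginal $P(L=k)$ it is more than is needed.

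Your bookkeeping points (i)--(iii) are handled correctly. Since $H(0)=0$ and $H(N)=1$, the cases $k=1$, $k=n-1$, and $n=3$ need no separate argument.
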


\begin{proof}
\textbf{Step 1} (A two-state time-inhomogeneous Markov chain on
expansion directions). Set $\rho = q/p$ and $u_i = 1-\rho^i$ for
$i=0,1,\ldots$. By Theorem~\ref{theorem-3.6} the coverage process is a
sequence of $n-1$ left/right expansions, and the exit probabilities
$r_p(a,b,x)$ are instances of the gambler's ruin formula, so the number
of admissible states at expansion step $i$ depends only on $i$. This
lets us represent the directions of successive expansions
$Y_1, Y_2, \ldots \in \{L,R\}$ as a \emph{two-state time-inhomogeneous
Markov chain}, where the state at step $i$ is the direction $Y_i$ of the most
recent expansion, and the transition probability to the next direction
depends on $i$ (through the current boundary sizes) but, crucially, not
on the history beyond $Y_i$. Explicitly, define
\[
P^i_{x,y} = \Pr(\text{next exit is } y \mid \text{last exit was } x
\text{ and } i \text{ vertices already found})
\]
for $x,y \in \{L,R\}$. Direct application of the biased gambler's ruin
formula gives
\begin{align*}
    P^i_{R,R} &= \frac{1-\rho^{i+1}}{1-\rho^{i+2}} = \frac{u_{i+1}}{u_{i+2}},
    &
    P^i_{R,L} &= 1 - P^i_{R,R} = \frac{\rho^{i+1}u_1}{u_{i+2}}, \\
    P^i_{L,R} &= \frac{1-\rho}{1-\rho^{i+2}} = \frac{u_1}{u_{i+2}},
    &
    P^i_{L,L} &= 1 - P^i_{L,R} = \frac{\rho\, u_{i+1}}{u_{i+2}}.
\end{align*}

\textbf{Step 2} (Recursive formulas for directional sub-counts). Let
$g_R(A,B)$ be the probability that among the first $A+B$ expansions
there are $A$ to the left, $B$ to the right, and the last expansion was
to the right; define $g_L(A,B)$ symmetrically. Then
\[
\Pr(L=k) = g_R(n-k-1, \; k-1) + g_L(n-k-1, \; k-1),
\]
since vertex $k$ is last discovered exactly when, among the preceding
$n-2$ expansions, there are $n-k-1$ left and $k-1$ right expansions (as
in Remark~\ref{remark-Hk}), regardless of the direction of the final
expansion itself.

\textbf{Step 3} (Induction on $m = A+B$). We claim that
\begin{equation} \label{eqn:grgl-formulas}
    g_R(A,B) = \frac{\rho^A u_1 u_B}{u_m u_{m+1}}
    \qquad\text{and}\qquad
    g_L(A,B) = \frac{\rho^m u_1 u_A}{u_m u_{m+1}}.
\end{equation}
For the base case, observe that $u_1^2/(u_1u_2) = p = g_R(0,1)$ and
$\rho u_1^2/(u_1u_2) = q = g_L(1,0)$. For the inductive step, suppose
\eqref{eqn:grgl-formulas} holds for all pairs with sum $m - 1$. Then,
for $m > 1$,
\begin{align*}
    g_R(A,B) &= g_L(A,B-1)\,P^{m-1}_{L,R} + g_R(A,B-1)\,P^{m-1}_{R,R} \\
    &= \frac{\rho^{m-1} u_1 u_A}{u_{m-1} u_{m}} \cdot \frac{u_1}{u_{m+1}}
       + \frac{\rho^A u_1 u_{B-1}}{u_{m-1} u_{m}} \cdot \frac{u_{m}}{u_{m+1}} \\
    &= \frac{u_1}{u_{m-1} u_{m} u_{m+1}}
       \Bigl( \rho^{m-1} u_1 u_A + \rho^A u_{B-1} u_m \Bigr)
    = \frac{u_1}{u_{m-1} u_{m} u_{m+1}}
       \Bigl( \rho^{2m-1} + \rho^{A} - \rho^{m-1+A} - \rho^m \Bigr),
\end{align*}
using the inductive hypothesis. A direct expansion of the claimed closed
form gives the same expression:
\[
    \frac{\rho^A u_1 u_B}{u_m u_{m+1}}
    = \frac{u_1}{u_{m-1} u_{m} u_{m+1}} \bigl( \rho^A u_B u_{m-1} \bigr)
    = \frac{u_1}{u_{m-1} u_{m} u_{m+1}}
      \Bigl( \rho^{2m-1} + \rho^{A} - \rho^{m-1+A} - \rho^m \Bigr).
\]
The inductive step for $g_L$ is analogous: for $m > 1$,
\begin{align*}
    g_L(A,B) &= g_L(A-1,B)\,P^{m-1}_{L,L} + g_R(A-1,B)\,P^{m-1}_{R,L} \\
    &= \frac{\rho^{m-1} u_1 u_{A-1}}{u_{m-1} u_{m}} \cdot \frac{\rho u_{m}}{u_{m+1}}
       + \frac{\rho^{A-1} u_1 u_{B}}{u_{m-1} u_{m}} \cdot \frac{\rho^{m}(1-\rho)}{u_{m+1}} \\
    &= \frac{\rho^m u_1}{u_{m-1} u_{m} u_{m+1}}
       \Bigl( u_{A-1} u_m + \rho^{A-1}u_1u_B \Bigr)
    = \frac{\rho^m u_1}{u_{m-1} u_{m} u_{m+1}}
       \Bigl( \rho^{m+A-1} + 1 - \rho^{m-1} - \rho^{A} \Bigr),
\end{align*}
which again matches the direct expansion
\[
    \frac{\rho^m u_1 u_A}{u_m u_{m+1}}
    = \frac{\rho^m u_1}{u_{m-1} u_{m} u_{m+1}} \bigl( u_A u_{m-1} \bigr)
    = \frac{\rho^m u_1}{u_{m-1} u_{m} u_{m+1}}
      \Bigl( 1 - \rho^{A} - \rho^{m-1} + \rho^{A+m-1} \Bigr).
\]
This establishes \eqref{eqn:grgl-formulas} for all $A,B \geq 0$.

\textbf{Step 4} (Closed form for $P(L=k)$). Combining Steps 2 and 3,
\begin{align*}
\Pr(L=k) &= g_R(n-k-1, \; k-1) + g_L(n-k-1, \; k-1)
= \frac{\rho^{n-k-1} u_1 u_{k-1}}{u_{n-2} u_{n-1}} + \frac{\rho^{n-2} u_1 u_{n-k-1}}{u_{n-2} u_{n-1}} \\
&= \frac{u_1}{u_{n-2}u_{n-1}} \Bigl( \rho^{n-k-1} u_{k-1} + \rho^{n-2} u_{n-k-1} \Bigr)
= \frac{u_1}{u_{n-2}u_{n-1}} \Bigl( \rho^{n-k-1} - \rho^{2n-k-3} \Bigr) \\
&= \frac{u_1}{u_{n-2}u_{n-1}} \Bigl( \rho^{n-k-1} u_{n-2} \Bigr)
= \frac{u_1\rho^{n-k-1}}{u_{n-1}}
=  \frac{(1-\rho)\rho^{\,n-k-1}}{1-\rho^{\,n-1}}. \qed
\end{align*}
\end{proof}

\subsubsection*{Limiting behaviour}\label{limiting-behaviour}

As $p \to 1$ we have $\rho \to 0$, the exit probability
$(1-\rho^x)/(1-\rho^N)$ approaches $1$ for all $x \geq 1$, and nearly
every expansion is clockwise: the arc sweeps clockwise around the ring
and the last vertex incorporated is the one immediately
counterclockwise of the start, so $P(L = n-1) \to 1$. By the analogous
argument in the opposite direction, $P(L = 1) \to 1$ as $p \to 0$. As
$p \to 1/2$ the ratio $\rho \to 1$, the exit probability approaches
$x/N$, and $P(L=k)$ approaches $1/(n-1)$ for every $k$;
Section~\ref{the-unbiased-walk} makes this precise.

\subsubsection*{Interpretation}\label{interpretation}

A fixed directional bias concentrates the last-discovery probability on
the vertex located opposite the dominant direction of expansion. When
the walk is clockwise-biased, the visited arc tends to expand clockwise,
so clockwise vertices are incorporated rapidly. As the arc grows around
the ring, the vertex immediately counterclockwise of the starting point
is typically approached only after the remainder of the cycle has
already been visited, making it the most likely last discovery. When the
walk is counterclockwise-biased, the vertex immediately clockwise of the
start plays the same role. As the bias strengthens, the distribution of
$L$ narrows progressively from the uniform distribution obtained in the
limit $p \to 1/2$ toward a point mass on a single vertex as $p$
approaches $0$ or $1$.

\begin{figure}[ht]
    \centering
    \includegraphics[width=0.75\textwidth]{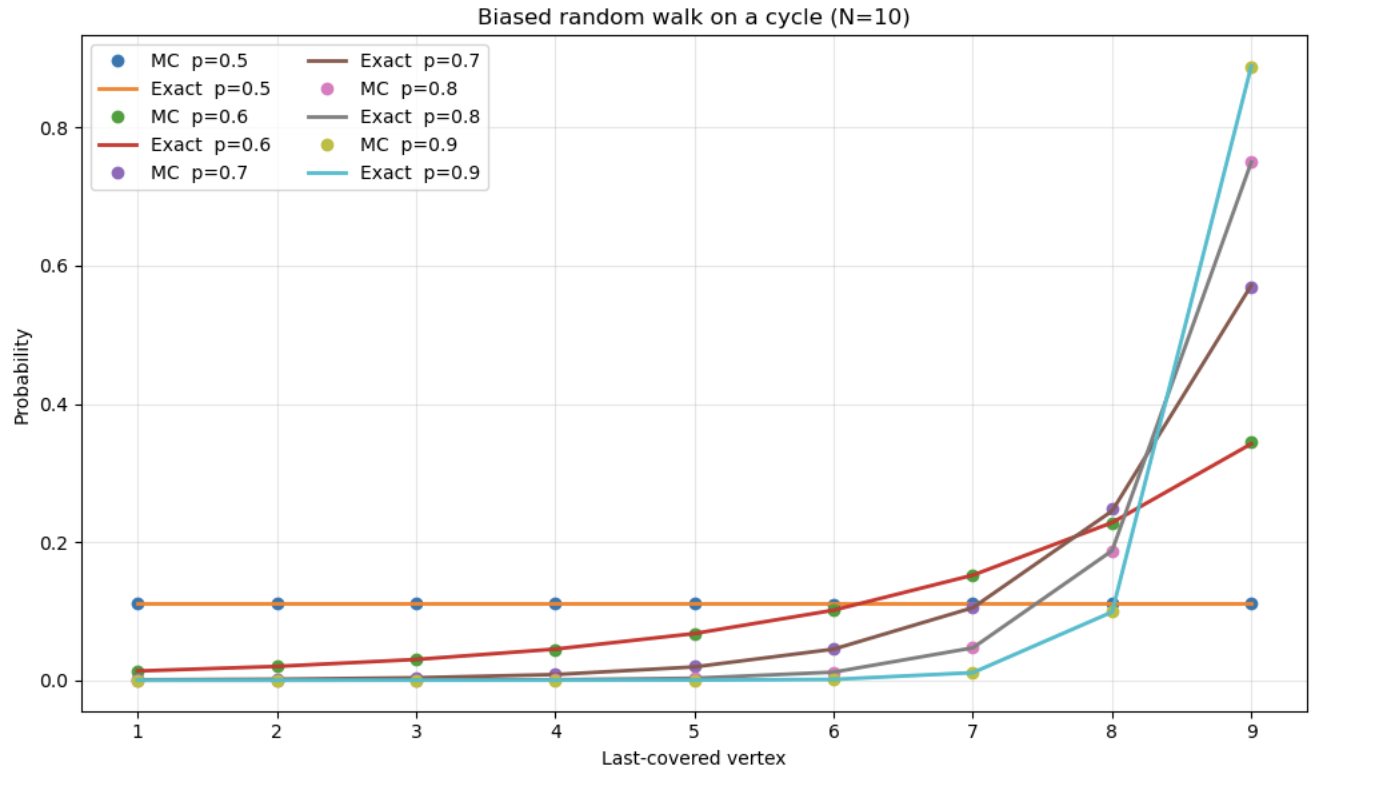}
    \caption{Last-coverage probabilities for a 10-vertex ring under different bias values $p=0.5,0.6,0.7,0.8,$ and $0.9$.}
    \label{fig:biased-walk}
\end{figure}

\subsection{The Unbiased Walk as a Limiting Case}\label{the-unbiased-walk}

Having derived a closed form for the biased walk, we now recover the
classical unbiased result as the limit of that formula as $p \to 1/2$
rather than proving it directly. The formula of
Theorem~\ref{Theorem-4.1} is not itself defined at $p=1/2$, since
$\rho = 1$ makes every factor $1-\rho^i$ vanish. The goal is to
show that the resulting $0/0$ singularity is removable and that the
removed value agrees with the known uniform distribution.

For the unbiased walk the exit probability from an arc with boundaries
$(a,b)$ and the state at position $x$ is $r(a,b,x) = (x+a+1)/(a+b+2)$,
by the symmetric gambler's ruin formula (see Feller, Section XIV.2, and
Levin and Peres, Proposition 9.6)
\cite{feller1971introduction,levin2026markov}. This expression is linear
in $x$, and under the reflection that swaps left and right, that is,
replacing $x$ by $-x$ and interchanging $a$ and $b$, it satisfies
\[
r(b, a, -x) = \frac{-x + b + 1}{b + a + 2} = 1 - r(a, b, x)
\]
\cite{feller1971introduction,levin2026markov}. Reflecting the arc and
the state's position thus exactly interchanges the right- and left-exit
probabilities, indicating that the unbiased walk has no intrinsic
directional preference, consistent with the uniform distribution
established below.

\begin{lemma}[Continuity of $P_p(L=k)$ at $p=1/2$] \label{Lemma-4.2}
Fix a vertex $k \in \{1,\ldots,n-1\}$ and let $F_k(p) = P_p(L=k)$ denote
the last-coverage probability of vertex $k$ as a function of the
clockwise bias $p \in (0,1)$, where $F_k(p)$ is given by
Theorem~\ref{Theorem-4.1} for $p \neq 1/2$. Then $F_k$ \JS{is continuous at $p=1/2$.}
\end{lemma}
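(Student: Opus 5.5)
The plan is to avoid the closed form of Theorem~\ref{Theorem-4.1} entirely and prove continuity from the finite-sum representation of Theorem~\ref{theorem-3.6}. That representation holds for every $p \in (0,1)$, including $p=1/2$ through the convention $r_{1/2}(a,b,x)=(x+a+1)/(a+b+2)$. So for all $p$, including $p=1/2$,
\[
F_k(p) = \sum_{h \in H_k} \prod_{i=1}^{n-1} r_p\!\left(a^h_i,b^h_i,x^h_i\right)^{\mathbf{1}\{y_i=R\}}\bigl(1-r_p\!\left(a^h_i,b^h_i,x^h_i\right)\bigr)^{\mathbf{1}\{y_i=L\}} .
\]
Continuity of $F_k$ at $1/2$ then reduces to two facts: the index data do not depend on $p$, and each $r_p$ is continuous at $p=1/2$.

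\textbf{Step 1: the index data are independent of $p$.} The set $H_k$ is finite, with $|H_k|=\binom{n-2}{k-1}$ by Remark~\ref{remark-Hk}. It is defined purely combinatorially. For a fixed $h$, the triple $(a^h_i,b^h_i,x^h_i)$ is also determined by $h$ alone. The counts $a^h_i,b^h_i$ are the numbers of $L$'s and $R$'s among $y_1,\dots,y_{i-1}$. Immediately after a discovery the walker stands on the newly discovered vertex, which is an endpoint of the arc fixed by $y_{i-1}$, and $x^h_1=0$. Hence $F_k$ is a fixed finite sum of fixed finite products of the functions $p\mapsto r_p(a,b,x)$ and $p\mapsto 1-r_p(a,b,x)$, each with integer parameters $j=x+a+1$ and $N=a+b+2$ satisfying $1\le j\le N-1$.

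\textbf{Step 2: each $r_p$ is continuous at $p=1/2$.} This is the only point needing care, and it is where the apparent $0/0$ singularity is removed. With $\rho=q/p$, which is continuous in $p$ and equals $1$ exactly at $p=1/2$, I factor the geometric sums:
\[
r_p = \frac{1-\rho^{j}}{1-\rho^{N}} = \frac{1+\rho+\cdots+\rho^{j-1}}{1+\rho+\cdots+\rho^{N-1}} \qquad (\rho\neq 1).
\]
The right-hand side is a rational function of $\rho$ whose denominator is strictly positive for $\rho>0$, so it is continuous on all of $(0,\infty)$. At $\rho=1$ it equals $j/N=r_{1/2}$, the symmetric convention. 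Therefore $p\mapsto r_p(a,b,x)$ is continuous on $(0,1)$, including at $1/2$. Sums and products of continuous functions are continuous, so $F_k$ is continuous at $p=1/2$.

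\textbf{Step 3: identify the limiting value.} For the subsequent Corollary~\ref{Corollary-4.3}, I would record the same factorisation applied to the closed form:
\[
\frac{(1-\rho)\rho^{n-1-k}}{1-\rho^{n-1}} = \frac{\rho^{n-1-k}}{1+\rho+\cdots+\rho^{n-2}} \to \frac{1}{n-1} \quad\text{as } \rho\to1.
\]
By Step 2 this limit equals $F_k(1/2)$.

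The main obstacle is conceptual rather than computational. One must be careful that $F_k(1/2)$ means the genuine symmetric-walk probability, not the undefined closed form. This is why the argument goes through Theorem~\ref{theorem-3.6}, which covers both cases uniformly, rather than trying to take a limit of Theorem~\ref{Theorem-4.1} directly.
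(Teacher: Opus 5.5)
Your proof is correct and follows the same route as the paper. You write $F_k(p)$ as the finite sum over $H_k$ from Theorem~\ref{theorem-3.6}, which covers $p=1/2$ through the symmetric convention, show that each $r_p(a,b,x)\to r_{1/2}(a,b,x)$, and pass the limit through the finite products and sums; the only difference is that you remove the $0/0$ by cancelling the factor $1-\rho$ rather than by L'H\^opital's rule, and the count $|H_k|$ is really $2\binom{n-2}{k-1}$ since both final directions discover $k$, which is harmless because only finiteness is used.
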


\begin{proof}
By Theorem~\ref{theorem-3.6}, $F_k(p) = \sum_{h\in H_k} P_p(h)$, where
$H_k$ is the finite set of expansion sequences in which vertex $k$ is
discovered last and, for a particular sequence $h$,
\[
P_p(h) = \prod_{i=1}^{n-1}
r_p(a_i^h,b_i^h,x_i^h)^{\mathbf{1}\{y_i=R\}}
\bigl(1-r_p(a_i^h,b_i^h,x_i^h)\bigr)^{\mathbf{1}\{y_i=L\}},
\qquad
r_p(a,b,x) = \frac{1-\rho^{\,x+a+1}}{1-\rho^{\,a+b+2}}, \quad \rho = \frac{1-p}{p}.
\]

%\textbf{Step 1} (Continuity for $p \neq 1/2$). The map
%$p \mapsto \rho = (1-p)/p$ is continuous on $(0,1)$. For $p \neq 1/2$ we
%have $\rho \neq 1$, so $1 - \rho^{a+b+2} \neq 0$ and $r_p(a,b,x)$ is a
%continuous function of $p$ on $(0,1)\setminus\{1/2\}$; consequently so
%is $1 - r_p(a,b,x)$. Since $P_p(h)$ is a finite product of such terms
%and $H_k$ is finite, $F_k$ is a finite sum of continuous functions and
%is therefore continuous on $(0,1)\setminus\{1/2\}$.

\textbf{Step 1} (Limit of a single exit probability). Fix $a,b,x$ and
set $m = x+a+1$, $N = a+b+2$, so that
$r_p(a,b,x) = (1-\rho^m)/(1-\rho^N)$. As $p \to 1/2$ we have
$\rho \to 1$ and both numerator and denominator tend to $0$. Applying
L'H\^opital's rule in $\rho$,
\[
\lim_{\rho \to 1} \frac{1-\rho^m}{1-\rho^N}
= \lim_{\rho \to 1} \frac{-m\rho^{m-1}}{-N\rho^{N-1}}
= \frac{m}{N}
= \frac{x+a+1}{a+b+2},
\]
which is exactly the unbiased exit probability above. Hence
$\lim_{p\to 1/2} r_p(a,b,x) = r_{1/2}(a,b,x)$, and by the same argument
$\lim_{p\to1/2}\bigl(1-r_p(a,b,x)\bigr) = 1-r_{1/2}(a,b,x)$.

\textbf{Step 3} (Limit of $F_k$). Since $\lim_{p\to1/2} r_p(a,b,x)$ exists and is finite, the limit can be pulled outside the product in the expression for $P_p(h)$. This implies that $P_p(h)$ is continuous at $p=1/2$:
\begin{align*}
P_{1/2}(h) &= \prod_{i=1}^{n-1} \lim_{p \to 1/2} r_p(a_i^h,b_i^h,x_i^h)^{\mathbf{1}\{y_i=R\}}
\bigl(1-r_p(a_i^h,b_i^h,x_i^h)\bigr)^{\mathbf{1}\{y_i=L\}} \\
&= \lim_{p \to 1/2} \prod_{i=1}^{n-1} r_p(a_i^h,b_i^h,x_i^h)^{\mathbf{1}\{y_i=R\}}
\bigl(1-r_p(a_i^h,b_i^h,x_i^h)\bigr)^{\mathbf{1}\{y_i=L\}} \\
&= \lim_{p \to 1/2} P_p(h).
\end{align*}

Hence,
\begin{align*}
    F_k(1/2) &= \sum_{h \in H_k} P_{1/2}(h) \\
    &= \sum_{h \in H_k} \lim_{p \to 1/2} P_{p}(h) \\
    &= \lim_{p \to 1/2} \sum_{h \in H_k} P_{p}(h) \\
    &= \lim_{p \to 1/2} F_k(p).
\end{align*}

% Since each $h \in H_k$ involves only
% finitely many exit-probability factors, Step~2 gives
% $\lim_{p\to1/2^-} P_p(h) = \lim_{p\to1/2^+} P_p(h) = P_{1/2}(h)$ for
% every $h \in H_k$, where $P_{1/2}(h)$ is the corresponding product of
% unbiased exit probabilities. Because $H_k$ is finite, the limit may be
% taken term by term:
% \[
% \lim_{p\to1/2^\pm} F_k(p)
% = \sum_{h\in H_k} \lim_{p\to1/2^\pm} P_p(h)
% = \sum_{h\in H_k} P_{1/2}(h)
% = P_{1/2}(L=k).
% \]
% Combined with Step~1, this shows the $0/0$ singularity of $F_k$ at
% $p=1/2$ is removable: setting $F_k(1/2) := P_{1/2}(L=k)$ makes $F_k$
% continuous on all of $(0,1)$. 
\end{proof}

% \textbf{Step 4} (Evaluating the limiting value). It remains to identify
% $P_{1/2}(L=k)$. For the unbiased walk the last vertex is uniformly
% distributed among the $n-1$ non-starting vertices, so
% $P_{1/2}(L=k) = 1/(n-1)$; this is the classical result of
% \cite{aleliunas1979random,levin2026markov,lovasz1993note,buchanan2022last},
% and is re-derived independently as Corollary~\ref{Corollary-4.3} below
% directly from Theorem~\ref{Theorem-4.1} together with Steps~1--3, without
% appealing to the cited literature. Either way,
% $\lim_{p\to1/2} F_k(p) = 1/(n-1)$.
% \phantom{}

\begin{corollary}[Uniform Last-Coverage Distribution] \label{Corollary-4.3}
For an unbiased random walk on $C_n$ starting at vertex $0$ (that is,
$p = 1/2$), every non-starting vertex is equally likely to be discovered
last: $P(L = k) = 1/(n - 1)$ for every $k \in \{1, 2, \ldots, n - 1\}$.
\end{corollary}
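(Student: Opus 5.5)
The plan is to combine Lemma~\ref{Lemma-4.2} with the closed form of Theorem~\ref{Theorem-4.1}. Lemma~\ref{Lemma-4.2} shows that $F_k(p)=P_p(L=k)$ is continuous at $p=1/2$. In that lemma, $F_k(1/2)$ denotes the finite sum $\sum_{h\in H_k}P_{1/2}(h)$ of products of unbiased exit probabilities. By Theorem~\ref{theorem-3.6}, this sum is exactly the last-coverage probability of the symmetric walk. It follows that
\[
P_{1/2}(L=k)=F_k(1/2)=\lim_{p\to 1/2}F_k(p)=\lim_{p\to1/2}\frac{(1-\rho)\rho^{\,n-1-k}}{1-\rho^{\,n-1}}.
\]
The limit on the right may be evaluated with Theorem~\ref{Theorem-4.1}, since that formula is valid on a punctured neighbourhood of $p=1/2$.

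Next I reduce the limit to one in $\rho$. The map $p\mapsto\rho=(1-p)/p$ is continuous, equals $1$ at $p=1/2$, and satisfies $\rho\neq 1$ whenever $p\neq 1/2$. A limit as $p\to1/2$ of an expression in $\rho$ therefore equals the corresponding limit as $\rho\to1$ with $\rho\ne1$. The factor $\rho^{\,n-1-k}$ tends to $1$. For the remaining ratio I will avoid L'H\^opital's rule and use the geometric-series factorisation $1-\rho^{\,n-1}=(1-\rho)(1+\rho+\cdots+\rho^{\,n-2})$. For $\rho\neq1$ this gives
\[
\frac{1-\rho}{1-\rho^{\,n-1}}=\frac{1}{\sum_{j=0}^{n-2}\rho^j}\longrightarrow\frac{1}{n-1}.
\]
L'H\^opital's rule in $\rho$, used exactly as in Step~1 of Lemma~\ref{Lemma-4.2}, yields the same value $1/(n-1)$. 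Multiplying the two limits gives $P_{1/2}(L=k)=1/(n-1)$ for every $k\in\{1,\ldots,n-1\}$.

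As a check, summing over $k$ gives total mass $1$, which agrees with the consistency remark after Theorem~\ref{theorem-3.6}.

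The main subtlety is justifying the identification $F_k(1/2)=P_{1/2}(L=k)$. The closed form of Theorem~\ref{Theorem-4.1} is undefined at $p=1/2$, so the value at $p=1/2$ must come from the expansion-sequence sum of Theorem~\ref{theorem-3.6} with the convention $r_{1/2}(a,b,x)=(x+a+1)/(a+b+2)$. That theorem applies to the symmetric walk because $p=1/2\in(0,1)$ and the bound $c\le p\le 1-c$ holds with $c=1/2$. Once this is in place, continuity from Lemma~\ref{Lemma-4.2} does the rest, and the limit computation is routine.
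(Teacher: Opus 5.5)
Your proposal is correct and follows the paper's proof: Lemma~\ref{Lemma-4.2} identifies $P_{1/2}(L=k)$ with $\lim_{p\to1/2}$ of the closed form from Theorem~\ref{Theorem-4.1}, and that limit is then evaluated in $\rho$. The differences are minor: you evaluate the limit through the factorisation $1-\rho^{\,n-1}=(1-\rho)\sum_{j=0}^{n-2}\rho^j$ rather than L'H\^opital's rule, and you spell out, via Theorem~\ref{theorem-3.6} with $c=1/2$, the identification of $F_k(1/2)$ with the true symmetric-walk probability, which the paper leaves implicit in the lemma.
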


\begin{proof}
By Theorem~\ref{Theorem-4.1}, for every $p \in (0,1)\setminus\{1/2\}$ we
have $P_p(L=k) = (1-\rho)\rho^{\,n-1-k}/(1-\rho^{\,n-1})$ with
$\rho = (1-p)/p$, and by Lemma~\ref{Lemma-4.2} the map
$p \mapsto P_p(L=k)$ extends continuously to $p=1/2$ with
\[
P_{1/2}(L=k) = \lim_{p\to1/2} P_p(L=k) = \lim_{\rho\to1} \frac{(1-\rho)\rho^{\,n-1-k}}{1-\rho^{\,n-1}}.
\]
As $\rho \to 1$, both numerator and denominator tend to $0$. Writing
$j = n-1-k$ and applying L'H\^opital's rule in $\rho$,
\[
\frac{d}{d\rho}\Bigl[(1-\rho)\rho^{j}\Bigr]
= -\rho^{j} + (1-\rho)\,j\,\rho^{j-1}
\;\xrightarrow[\rho\to1]{}\; -1,
\qquad
\frac{d}{d\rho}\Bigl[1-\rho^{n-1}\Bigr]
= -(n-1)\rho^{n-2}
\;\xrightarrow[\rho\to1]{}\; -(n-1),
\]
so $P_{1/2}(L=k) = (-1)/\bigl(-(n-1)\bigr) = 1/(n-1)$. This value is the
same for every $k \in \{1,\ldots,n-1\}$.
\end{proof}

\begin{remark}
Corollary~\ref{Corollary-4.3} is derived purely from
Theorem~\ref{Theorem-4.1} and Lemma~\ref{Lemma-4.2}, without
re-deriving the uniform distribution from Theorem~\ref{theorem-3.6} by an
independent combinatorial argument, demonstrating that the closed-form
biased formula is consistent with, and in fact implies, the classical
unbiased result. Both possible directions of the final expansion are
already accounted for, via the combination $g_R + g_L$ in Step~2 of the
proof of Theorem~\ref{Theorem-4.1}, so no separate case analysis is
needed.
\end{remark}

Corollary~\ref{Corollary-4.3} appears counterintuitive at first. One
expects vertices far from the starting point, such as the antipodal
vertex on a large ring, to be harder to reach and hence more likely to
be discovered last. The expectation is flawed because the
interval-expansion process is perfectly balanced under the unbiased
walk. The arc is equally likely to extend left or right at each
juncture, and no vertex enjoys any structural advantage from the
perspective of the growing arc boundaries, so graph distance from the
start does not translate into any difference in last-coverage
probability. As soon as any bias is introduced, however small, the
distribution ceases to be uniform and concentrates away from the
dominant direction of travel; Corollary~\ref{Corollary-4.3} is exactly
the statement that this concentration vanishes continuously as the bias
is turned off.

\subsubsection*{Numerical verification}\label{numerical-verification}

Corollary~\ref{Corollary-4.3} was verified by Monte Carlo simulation.
For ring sizes $n = 5, 10, 20, 50,$ and $100$, twenty thousand
independent walks were simulated, each continuing until every vertex had
been visited, and the fraction of simulations in which each vertex was
the last discovery was recorded.

In each case the observed fractions were close to the theoretical value
$1/(n-1)$, with deviations consistent with the expected statistical
error from the sample size. For $n = 10$, for which the theoretical
probability is $1/9 \approx 0.1111$, the observed fractions ranged from
$0.1104$ to $0.1121$ across the nine non-starting vertices.

\begin{figure}[ht]
    \centering
    \includegraphics[width=0.7\textwidth]{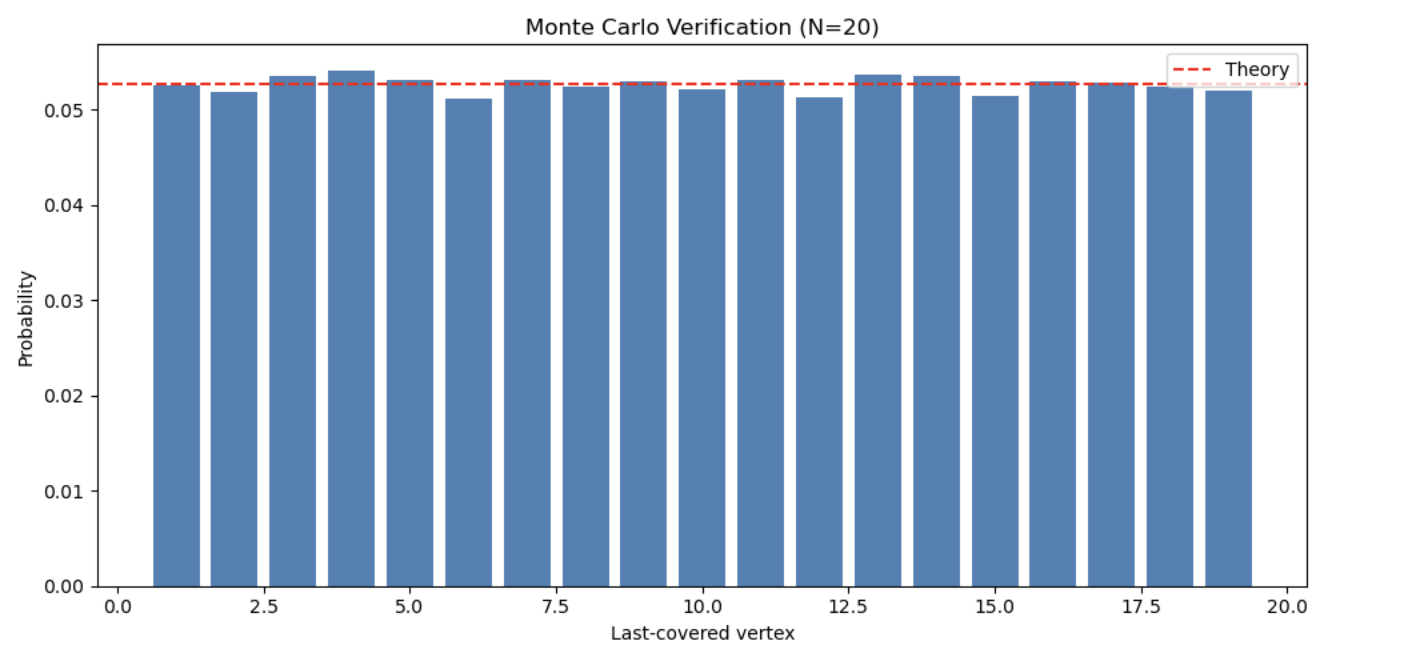}
    \caption{Observed last-coverage frequencies for the unbiased walk, compared with the theoretical value.}
    \label{fig:unbiased-simulation}
\end{figure}

\subsection{The Time-Varying Bias Walk}\label{the-time-varying-bias-walk}

The most general model allows the directional preference to change at
each step: the clockwise probability at step $t$ is $p(t)$, taken from a
deterministic schedule that is known in advance but otherwise arbitrary,
with all values strictly between $0$ and $1$ so that both directions
remain possible. This encompasses many practical scenarios, including a
patrol robot that alternates direction on a fixed schedule, a routing
protocol that adjusts forwarding probabilities by time of day, or a walk
modulated by an external periodic signal.

To determine the next discovery at any point, three variables suffice:
the boundary sizes $A_t$ and $B_t$ and the state's current position
$X_t$ within the arc. Together with the time $t$, which determines
$p(t)$, these constitute the full state $(A_t, B_t, X_t, t)$; for a
given ring size the number of possible triples $(a,b,x)$ is finite and
bounded by $O(n^3)$.

\begin{corollary}[Exact Last-Coverage Formula, Time-Varying Walk] \label{corollary-4.5}
For any deterministic schedule $p(t)$ satisfying
$c \leq p(t) \leq 1-c$ for all $t \geq 0$ and some constant $c > 0$, the
last-coverage probability is
\[
P(L = k)
= \sum_{t=n-1}^{\infty}\; \sum_{h \in H_k^t} P(h),
\qquad
P(h)
= \prod_{s=1}^{t}
  p(s)^{\mathbf{1}\{I_s=+1\}}\,(1-p(s))^{\mathbf{1}\{I_s=-1\}},
\]
where $H_k^t$ denotes the set of $t$-step sequences
$(I_1,\ldots,I_t)\in\{+1,-1\}^t$ that achieve complete coverage of $C_n$
for the first time at step $t$ with vertex $k$ as the final discovery.
This series is generally infinite; Algorithm~1 below gives a computable,
finitely truncated approximation to it with an explicit error bound.
\end{corollary}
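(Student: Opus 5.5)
The plan is to decompose the event $\{L=k\}$ by the value of the cover time, and then show that the decomposition exhausts the probability space up to a null set; the second part is where the bound $c\le p(t)\le 1-c$ is used. I would index steps as in the statement, so that the $s$-th increment $I_s\in\{+1,-1\}$ is clockwise with probability $p(s)$.

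\textbf{Step 1} (Finite-horizon identity). For each fixed $t\ge n-1$, the event $\{T_{\mathrm{cover}}=t,\ L=k\}$ is determined by $(I_1,\ldots,I_t)$. Since $X_s=\sum_{r\le s}I_r \pmod n$ and $V_s$ is determined by the path, this event is exactly the disjoint union over $h\in H_k^t$ of the cylinder events $\{(I_1,\ldots,I_t)=h\}$. The schedule is deterministic and the increments are independent, so each cylinder has probability $\prod_{s=1}^t p(s)^{\mathbf 1\{I_s=+1\}}(1-p(s))^{\mathbf 1\{I_s=-1\}}$. Summing over $h$ gives $P(T_{\mathrm{cover}}=t,\,L=k)=\sum_{h\in H_k^t}P(h)$. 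The lower limit $t=n-1$ comes from the fact that at least $n-1$ steps are needed to discover $n-1$ new vertices. By Theorem~\ref{Theorem-3.1}, the final discovery at time $t$ is an endpoint extension, so $L=X_t$ is well defined.

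\textbf{Step 2} (Almost sure finite coverage). This is the main obstacle, because the chain is time-inhomogeneous and the irreducibility argument of Theorem~\ref{theorem-3.6} does not apply directly. I would use a blocking argument. Partition time into consecutive blocks of length $n$. Whatever the state at the start of a block, the event that all $n$ steps in the block are clockwise has probability $\prod p(s)\ge c^{n}$. On that event the walker traverses the whole ring, so coverage is complete by the end of the block. Events in disjoint blocks depend on disjoint sets of independent increments, which gives $P(T_{\mathrm{cover}}>jn)\le (1-c^n)^j\to 0$. Hence $T_{\mathrm{cover}}<\infty$ almost surely. As a by-product, $E[T_{\mathrm{cover}}]<\infty$, with a geometric tail.

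\textbf{Step 3} (Summation). By Step~2, $\{L=k\}$ equals the disjoint union $\bigsqcup_{t\ge n-1}\{T_{\mathrm{cover}}=t,\,L=k\}$ up to a null set. Countable additivity together with Step~1 then yields the stated series. The series converges because its terms are nonnegative and its partial sums are bounded by $1$.

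\textbf{Step 4} (Truncation error). The same tail bound controls the error of truncating at a horizon $T$. The neglected mass is at most $P(T_{\mathrm{cover}}>T)\le (1-c^n)^{\lfloor T/n\rfloor}$, which is the explicit bound to be used by Algorithm~1. I would also remark that the partial sums can be computed by dynamic programming over the state $(A_s,B_s,X_s)$, which has $O(n^3)$ states, rather than by enumerating $H_k^t$ directly.
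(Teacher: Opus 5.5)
Your proposal is correct and follows essentially the same route as the paper: you partition $\{L=k\}$ by the realised value of $T_{\mathrm{cover}}$ into cylinder events indexed by $H_k^t$, and you prove $P(T_{\mathrm{cover}}<\infty)=1$ by a geometric-trials argument on blocks of $n$ steps that are all clockwise, each block having probability at least $c^n$. Your bound $P(T_{\mathrm{cover}}>jn)\le(1-c^n)^j$, which uses independence across disjoint blocks, is sharper than the paper's $(1-c^n)^{\lfloor j/(n-1)\rfloor}$, so your truncation error $(1-c^n)^{\lfloor T/n\rfloor}$ improves on the paper's $(1-c^n)^{\lfloor T/(n(n-1))\rfloor}$; both bounds are valid.
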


\begin{proof}
\textbf{Step 1} (Indicator-sum representation of coverage). Represent
each step $s$ by a signed indicator $I_s \in \{+1,-1\}$, with
$P(I_s = +1) = p(s)$ (clockwise) and $P(I_s = -1) = 1-p(s)$
(counterclockwise). Define $S_t = \sum_{s=1}^{t} I_s$, so the walker's
displacement from vertex $0$ after $t$ steps is $S_t \pmod{n}$. By the
Connected Arc Property, complete coverage first occurs at
\[
T_{\mathrm{cover}}
= \min\!\Bigl\{t \geq 0 :
  \max_{0\leq s\leq t} S_s - \min_{0\leq s\leq t} S_s \geq n-1
  \Bigr\}.
\]
Partitioning over the realised value of $T_{\mathrm{cover}}$ and over
the sequences of that length that result in vertex $k$ being the final
discovery gives the stated double sum.

\textbf{Step 2} (Convergence). The series converges because
$P(T_{\mathrm{cover}} < \infty) = 1$. Since $p(s) \geq c > 0$ for all
$s$, in any block of $n$ consecutive steps the probability that all
steps are clockwise is at least $c^n > 0$. By a geometric-trials
argument, the range $R_t = \max_{s\leq t} S_s - \min_{s\leq t} S_s$
reaches $n-1$ in finitely many such blocks with probability $1$;
specifically,
\[
P(T_{\mathrm{cover}} > kn)
\;\leq\;
\bigl(1-c^n\bigr)^{\lfloor k/(n-1)\rfloor},
\]
which decays geometrically in $k$
\cite{levin2026markov,aldous2002reversible}. Therefore
$\mathbb{E}[T_{\mathrm{cover}}] < \infty$,
$P(T_{\mathrm{cover}} < \infty) = 1$, and $\sum_{k=1}^{n-1} P(L=k) = 1$,
confirming the series is properly normalised.
\end{proof}

\medskip
\noindent\rule{\linewidth}{0.4pt}
\nopagebreak
\medskip

\noindent\textbf{Algorithm 1 (Truncated Series Approximation).}
The exact series in Corollary~\ref{corollary-4.5} sums over infinitely
many step sequences and is not directly computable. A practical
approximation fixes a finite truncation horizon $T \geq n-1$ and sums
only over sequences of length at most $T$:
\[
\widehat{P}_T(L = k)
= \sum_{t=n-1}^{T}\;\sum_{h \in H_k^t} P(h).
\]

\begin{enumerate}
\item \textbf{Fix a horizon.} Choose $T \geq n-1$. For each length $t$
      from $n-1$ to $T$, consider all $2^t$ step sequences
      $h = (I_1,\ldots,I_t)\in\{+1,-1\}^t$.

\item \textbf{Check first-coverage at step $t$.} Compute the partial
      sums $S_s = \sum_{j=1}^{s} I_j$ for $s=0,\ldots,t$. The sequence
      achieves complete coverage for the \emph{first} time at step $t$
      if and only if
      \[
        \max_{0\leq s\leq t} S_s - \min_{0\leq s\leq t} S_s = n-1
        \quad\text{and}\quad
        \max_{0\leq s\leq t-1} S_s - \min_{0\leq s\leq t-1} S_s < n-1 .
      \]
      Sequences failing either condition are discarded.

\item \textbf{Identify the last-covered vertex.} For a surviving
      sequence, the final step extends one boundary of the arc for the
      last time. The last-covered vertex is
      $\bigl(\max_{s\leq t} S_s + 1\bigr)\bmod n$ if the final step
      extended the right boundary ($I_t = +1$ and $S_t > S_{t-1}$), and
      $\bigl(\min_{s\leq t} S_s - 1\bigr)\bmod n$ if it extended the
      left boundary. If this vertex is not $k$, discard the sequence.

\item \textbf{Accumulate probability.} For each surviving
      $h \in H_k^t$, compute $P(h)$ as in
      Corollary~\ref{corollary-4.5} and add it to the running total
      $\widehat{P}_T(L=k)$.
\end{enumerate}

\noindent\textbf{Convergence and error bound.} As $T\to\infty$,
$\widehat{P}_T(L=k)\to P(L=k)$ monotonically from below, with
\[
0
\;\leq\;
P(L=k) - \widehat{P}_T(L=k)
\;\leq\;
P(T_{\mathrm{cover}} > T)
\;\leq\;
\bigl(1-c^n\bigr)^{\lfloor T/(n(n-1))\rfloor},
\]
so choosing $T$ with $(1-c^n)^{\lfloor T/(n(n-1))\rfloor} < \varepsilon$
guarantees error at most $\varepsilon$. Since $\widehat{P}_T(L=k)$ is
the only quantity from this subsection that is evaluated numerically,
the time-varying results reported below are truncated, error-bounded
approximations rather than exact values. The truncated sum enumerates at
most $\sum_{t=n-1}^{T} 2^t = O(2^T)$ sequences, which is exponential in
$T$ and therefore practical only for moderate $T$ and $n$; reducing this
cost, for instance by evaluating the recursion directly over the
$O(n^3)$ states $(a,b,x)$ rather than by exhaustive enumeration, is a
natural improvement but is not required for the correctness of the
guarantee above.

\medskip
\noindent\rule{\linewidth}{0.4pt}
\medskip

Two schedules illustrate the range of behaviour. Under a sinusoidal bias
$p(t) = 1/2 + \epsilon\sin(\omega t)$, the walk alternates between
clockwise and counterclockwise preference, and a small $\epsilon$ makes
the last-coverage distribution a small perturbation of the uniform one.
A high oscillation frequency causes the two biases to average out over
successive steps, so the distribution approaches the uniform
distribution of Corollary~\ref{Corollary-4.3}, whereas a low frequency
lets the bias persist for many steps before reversing, producing more
pronounced departures from uniformity; Section~\ref{monte-carlo-simulation}
examines this quantitatively. Under a piecewise constant bias, with
$p(t) = p_1$ before time $\tau$ and $p(t) = p_2$ thereafter, the early
phase guides which vertices are discovered first while the late phase,
during which the final few vertices are reached, dominates the identity
of the last. Algorithm~1 accommodates either schedule without
modification.

\section{Monte Carlo Simulation}\label{monte-carlo-simulation}

This section presents simulations that complement the theoretical
results: first the effect of a sinusoidally time-varying bias, then the
relationship between graph symmetry and last-coverage uniformity.

\subsection{Variance Analysis of the Sinusoidal Walk}\label{variance-analysis-of-the-sinusoidal-time-varying-walk}

For a ring of size $n = 10$, twenty thousand independent walks were
simulated for each combination of
$\epsilon \in \{0, 0.05, 0.1, 0.2, 0.3\}$ and
$\omega \in \{0.25, 0.5, 1, 2, 3, 4, 5, 6\}$ under the schedule
$p(t) = 1/2 + \epsilon\sin(\omega t)$, with $p(t)$ clipped to
$[0.001, 0.999]$ for numerical stability. For each combination the
empirical distribution of $L$ was used to estimate its mean and
variance, treating the vertex labels $1, \ldots, n-1$ as numerical
values. Table~\ref{tab:variance} reports the estimated variances.

\begin{table}[ht]
\centering
\caption{Estimated variance of the last-coverage vertex $L$ on $C_{10}$
under the sinusoidal schedule $p(t) = 1/2 + \epsilon\sin(\omega t)$,
from twenty thousand Monte Carlo trials per cell.}
\label{tab:variance}
\begin{tabular}{c|cccccccc}
\toprule
$\epsilon$ & \multicolumn{8}{c}{$\omega$} \\
\cmidrule(l){2-9}
 & 0.25 & 0.5 & 1.0 & 2.0 & 3.0 & 4.0 & 5.0 & 6.0 \\
\midrule
0.00 & 6.657 & 6.642 & 6.707 & 6.741 & 6.649 & 6.607 & 6.737 & 6.737 \\
0.05 & 6.556 & 6.723 & 6.624 & 6.667 & 6.675 & 6.637 & 6.647 & 6.600 \\
0.10 & 6.351 & 6.541 & 6.653 & 6.685 & 6.692 & 6.694 & 6.657 & 6.358 \\
0.20 & 5.497 & 6.182 & 6.365 & 6.592 & 6.738 & 6.613 & 6.418 & 5.697 \\
0.30 & 4.269 & 5.551 & 6.062 & 6.527 & 6.744 & 6.544 & 6.259 & 4.501 \\
\bottomrule
\end{tabular}
\end{table}

Three trends are apparent. First, at $\epsilon = 0$ the schedule reduces
to the unbiased walk regardless of $\omega$, and the estimated variance
is close to the exact value $\mathrm{Var}(L) = (n^2-1)/12 = 6.667$
implied by Corollary~\ref{Corollary-4.3} at every $\omega$ tested, up to
Monte Carlo error; this is a sanity check on the simulation before
$\epsilon$ is varied.

Second, for fixed $\omega$, increasing $\epsilon$ generally pushes the
variance further from that baseline, and the size of the shift is not
uniform across $\omega$: at $\omega = 3$ the variance stays within
$6.65$--$6.74$ across every $\epsilon$ tested, while at $\omega = 0.25$
it falls monotonically from $6.657$ at $\epsilon = 0$ to $4.269$ at
$\epsilon = 0.3$. This is consistent with
Section~\ref{the-biased-walk}: a sustained directional preference
concentrates the last-coverage distribution, and larger $\epsilon$
strengthens that preference whenever the schedule fails to average out
over a typical walk.

Third, for fixed $\epsilon > 0$ the variance is not monotonic in
$\omega$. It is depressed at the smallest frequency tested, rises toward
the baseline as $\omega$ increases through $2$--$4$, and falls again at
$\omega = 5$ and $\omega = 6$. The typical cover time of a ten-vertex
ring is of order $n^2 \approx 100$ steps, so a schedule with
$\omega = 0.25$ (period $\approx 25$ steps) completes only a fraction of
a cycle over a typical walk; the walk experiences $p(t)$ as an almost
constant drift, reproducing the concentration effect of the fixed-bias
walk. Intermediate frequencies complete several oscillations within a
typical cover time, so the two phases largely cancel and the
distribution of $L$ stays close to uniform.

The renewed dip near $\omega = 6$ is a discrete-time aliasing effect
rather than a second high-frequency regime. Since $t$ is restricted to
integers, $\sin(\omega t) = \sin((\omega - 2\pi)t)$ for every integer
$t$, so a schedule with $\omega$ close to $2\pi \approx 6.283$ is
observationally close to one with a small \emph{negative} effective
frequency $\omega - 2\pi \approx -0.283$, comparable in magnitude to
$\omega = 0.25$. This is corroborated by the estimated mean of $L$,
recorded alongside the variance: at $\epsilon = 0.3$ the mean is pulled
to approximately $6.9$ at $\omega = 0.25$ and to approximately $3.3$ at
$\omega = 6$, an approximately symmetric displacement in opposite
directions from the uniform mean of $5$. The two low-variance regions
therefore reflect the same mechanism, a persistent near-constant bias,
acting in opposite directions. The governing quantity is thus not
$\omega$ in isolation but the number of oscillations completed within a
typical cover time, together with the aliased frequency
$\min(\omega, \,2\pi - \omega)$ induced by integer-step sampling.

\begin{figure}[ht]
    \centering
    \includegraphics[width=0.49\linewidth]{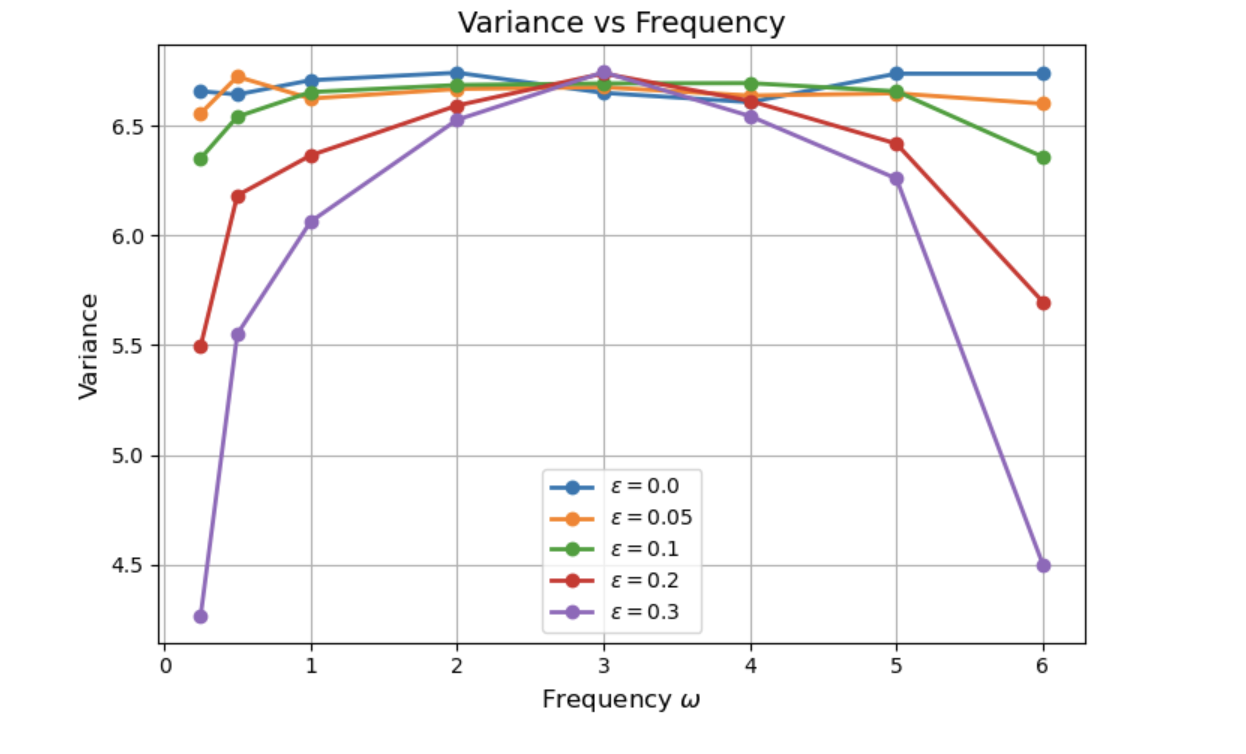}%
    \includegraphics[width=0.49\linewidth]{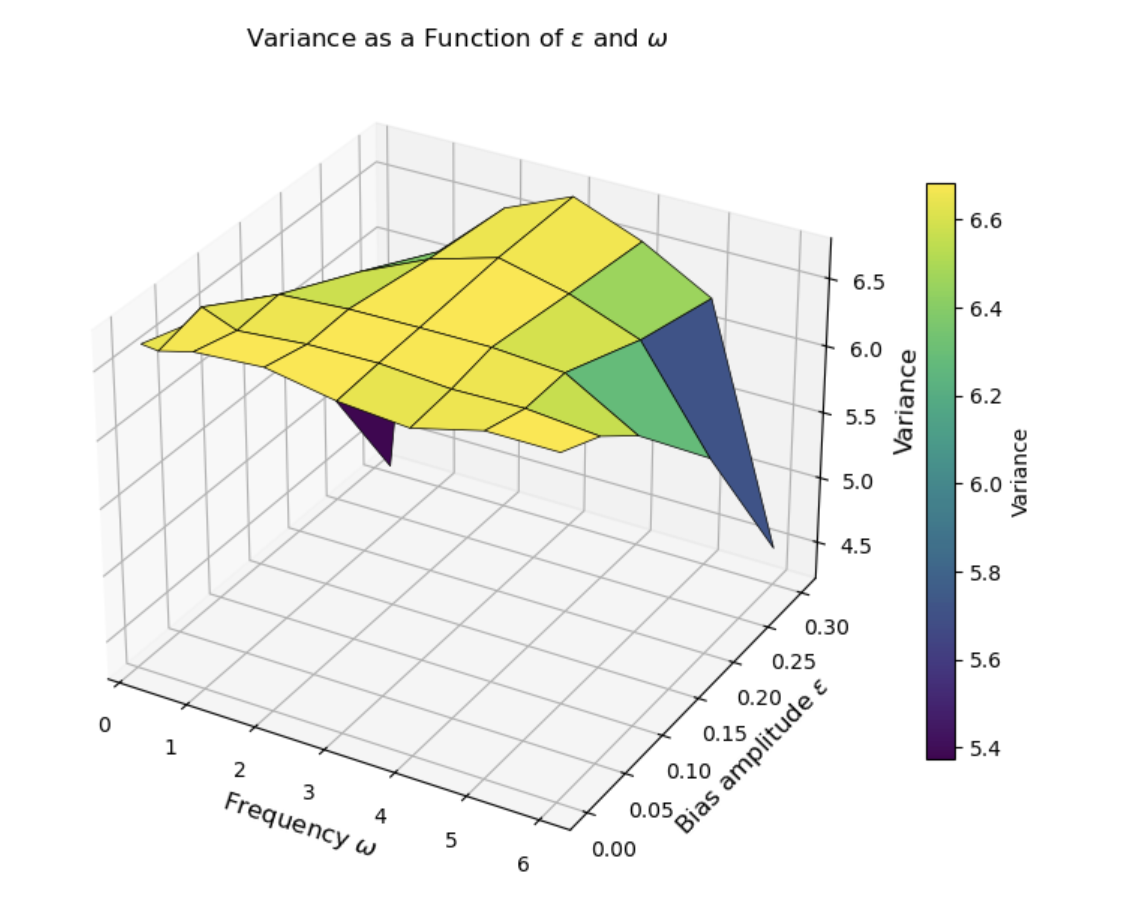}
    \caption{(left) Variance of the last-coverage vertex versus oscillation frequency $\omega$ for different bias amplitudes $\epsilon$. (right) The same data as a three-dimensional surface over $(\epsilon,\omega)$. Both are estimated from 20,000 Monte Carlo simulations on $C_{10}$.}
    \label{fig:variance}
\end{figure}

\subsection{Symmetry and Last-Coverage Probabilities}\label{symmetry-and-last-coverage}

Last-coverage probabilities are uniform for the unbiased random walk on
cycles, which is perhaps counterintuitive, as one might expect more
distant nodes to be more likely to be covered last. They are also
uniform for unbiased walks on complete graphs, and these are the only
examples \cite{lovasz1993note}. This raises the question of whether
there is a connection between symmetry in graphs and uniformity of
last-coverage probabilities.

To gain some intuition, we consider a different class of highly
symmetric graphs: the subset of generalized Petersen graphs $\GP(n,k)$
which are vertex-transitive. These are composed of two cycles of length
$n$ connected by a matching, with the $i$th node of the first cycle
adjacent to node $ik \pmod{n}$ in the second. For example, $\GP(8,3)$ is
shown below. For all of these graphs we take the transition
probabilities to be uniform across the neighbours of the current node.

\begin{center}
\begin{tikzpicture}[scale=0.85]

\tikzstyle{vertex}=[draw, circle, color = black, fill=black, text opacity = 1, inner sep = 1.4pt]

\foreach \s in {0,...,7}{
    \node[vertex] (v\s) at ({360/8 * (-\s)}:1.8cm){};
    \node[vertex] (u\s) at ({360/8 * (-\s)}:1.1cm){};
}
\foreach \s [evaluate = \r using {int(mod(\s+1,8))}] in {0,...,7}{
    \draw (v\s) -- (v\r);
}
\foreach \s [evaluate = \r using {int(mod(\s+3,8))}] in {0,...,7}{
    \draw (u\s) -- (u\r);
    \draw (v\s) -- (u\s);
}

\end{tikzpicture}
\end{center}

Since $\GP(n,k)$ is neither a cycle nor a complete graph, its
last-coverage probabilities are not uniform. To gauge how non-uniform
they are, we compare them with those of Erd\H{o}s--R\'enyi random graphs
$G(n,1/2)$, which include each edge independently with probability
$1/2$. Figure~\ref{fig:symmetry-uniformity} compares all
vertex-transitive generalized Petersen graphs with at most 22 vertices
against Erd\H{o}s--R\'enyi graphs of the same sizes. The standard
deviation of last-coverage probabilities in the generalized Petersen
graphs is markedly lower, supporting the idea that structural symmetry
can produce more uniform last-coverage probabilities.

\begin{figure}[ht]
    \centering
    \includegraphics[width=0.5\linewidth]{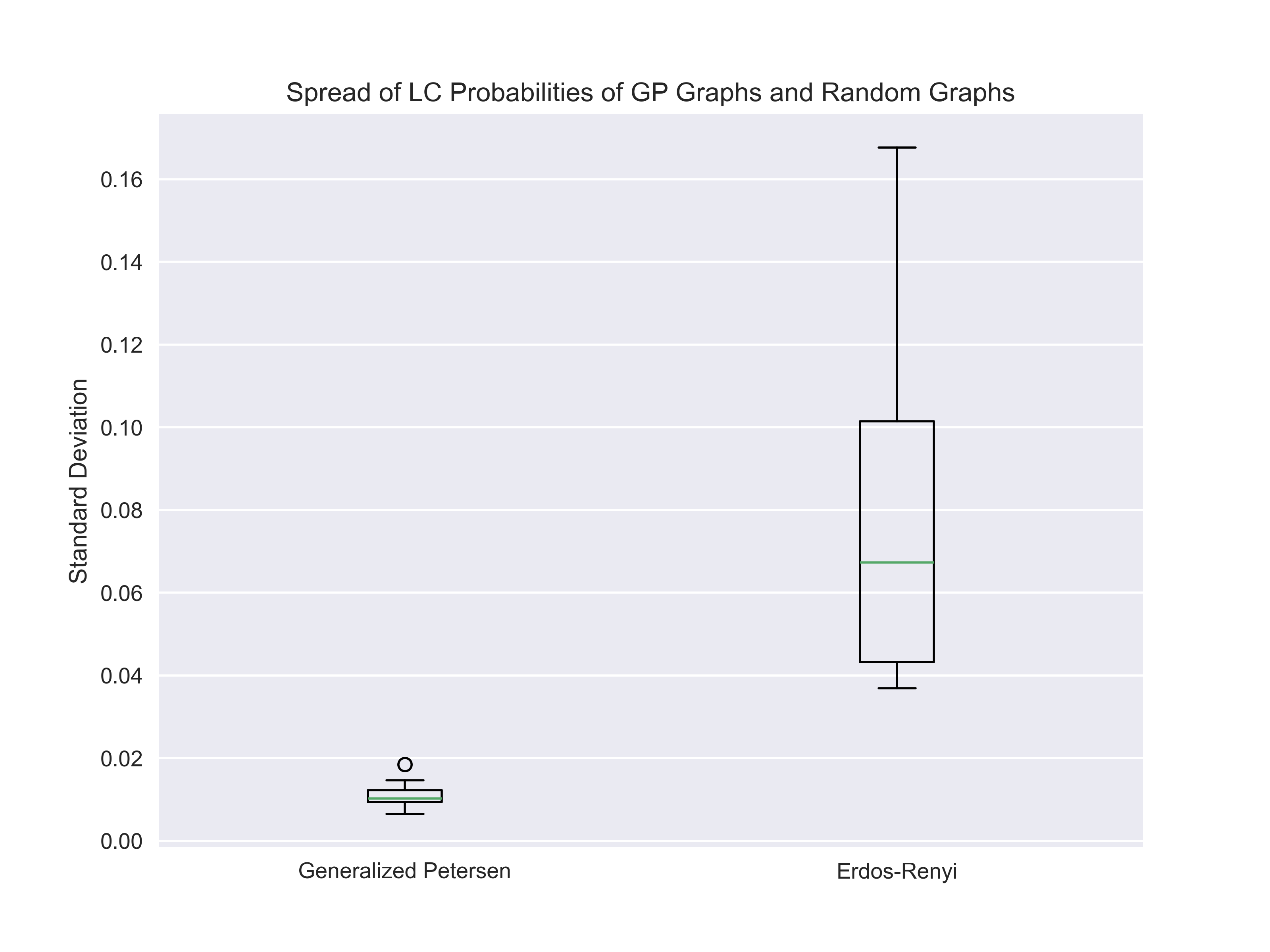}
    \caption{Standard deviation of the last-coverage probabilities of the non-source nodes in two graph families. (left) All vertex-transitive generalized Petersen graphs with at most 22 nodes. (right) Ten $G(n,1/2)$ graphs each of sizes $n=6, 8, \ldots, 22$, matching the sizes of the generalized Petersen graphs.}
    \label{fig:symmetry-uniformity}
\end{figure}

\section{Applications, Open Problems, and Conclusion}\label{applications-open-problems-and-conclusion}

\subsection{Applications}\label{applications}

In ring-topology computer networks, nodes are arranged in a cycle and
communicate by passing control messages, called tokens, around the ring;
the IEEE 802.5 Token Ring standard is a well-established protocol of
this type. When token forwarding follows a random-walk-like rule, the
last-coverage vertex corresponds to the last node to receive an initial
token \cite{donnan2005token}, and random walks are a space-efficient
mechanism for network exploration with cover time of order $n^2$ on a
ring \cite{aleliunas1979random,levin2026markov}. For rings exhibiting
directional preference, as in certain directional routing protocols,
Theorem~\ref{Theorem-4.1} quantifies how the bias shifts the
last-coverage distribution and identifies which nodes are most at risk
of being last, while Corollary~\ref{Corollary-4.3} provides the
complementary fairness guarantee for symmetric protocols: every node is
equally likely to be last, with none at a structural disadvantage. The
same reading applies to epidemic-style information propagation on a ring
\cite{boyd2006randomized}, where the last-informed node is the
last-coverage vertex, so Theorem~\ref{Theorem-4.1} quantifies the
asymmetry in information latency under directed forwarding.

A mobile robot inspecting every point along a circular track faces a
coverage problem modelled directly by a random walk on $C_n$; robotic
coverage strategies are surveyed by Choset and by Galceran and Carreras
\cite{choset2001coverage,galceran2013survey,choset1998coverage}. The
last-coverage vertex is the location most likely to be inspected last,
and hence the weakest point in the patrol pattern.
Theorem~\ref{Theorem-4.1} identifies it for a patrol with a consistent
directional tendency, Corollary~\ref{Corollary-4.3} guarantees that a
symmetric patrol overlooks no location systematically, and
Corollary~\ref{corollary-4.5} enables schedule optimisation, since for
any deterministic $p(t)$ the last-coverage probabilities can be
approximated to any desired accuracy, allowing a designer to search for
a schedule that minimises expected cover time or equalises last-coverage
probabilities.

The framework applies equally to search-and-detection problems with
stationary targets. Consider a maritime search over a circular region
discretised into $n$ sectors, with a helicopter moving between adjacent
sectors according to a schedule $p(t)$ and a stationary target, such as
wreckage or a distress signal, in one of the $n-1$ non-starting sectors.
The last-coverage vertex identifies the sector most likely to remain
unsearched, and if the search has proceeded without locating the target
the conditional probability that it lies in sector $k$ is proportional
to $P(L=k)$. For a search biased by wind or current,
Theorem~\ref{Theorem-4.1} gives the exact skewed distribution,
identifying the sector just behind the dominant search direction as the
most likely remaining location; for a symmetric search no sector is more
suspect than any other. Search coordinators can use these probabilities
to allocate additional resources to the most likely unsearched sector.

From a theoretical perspective, the interval-expansion representation
and its two-state Markov chain refinement show that the last-coverage
problem on a ring can be solved from a reduced description of the
process: it suffices to monitor the two boundary sizes, or equivalently
only the direction of the most recent expansion. The exit probabilities
$r(a,b,x)$ play the role of harmonic functions for the walk restricted
to the current arc, and the associated value function solves a discrete
boundary value problem on the state space
\cite{levin2026markov,doob1984classical}.

\subsection{Open Problems and Future Directions}\label{open-problems-and-future-directions}

\begin{itemize}
\item
  \emph{Higher-dimensional periodic lattices.} The cycle $C_n$ is a
  one-dimensional torus. The $d$-dimensional torus does not share the
  Connected Arc Property, since the visited region need not be
  contiguous, so extending the analysis would require substantially new
  methods. Cover times there are known, of order $n^2$ for $d=1$,
  $n(\log n)^2$ for $d=2$, and $n\log n$ for $d \geq 3$, but the
  distribution of the last-coverage vertex is open for all $d \geq 2$
  \cite{levin2026markov,aldous2002reversible}.
\item
  \emph{Multiple walkers.} With $k$ independent walkers from different
  starting points, the covered set is the union of the individual arcs,
  and the ring is covered roughly $\log k$ times faster in expectation
  \cite{alon2008many}. The $k$ walks form a joint Markov chain on
  $C_n^k$, so the last-coverage distribution can in principle be
  computed via absorption probabilities of that chain, but the state
  space grows exponentially in $k$ and an efficient reduced description
  analogous to the interval-expansion framework remains open.
\item
  \emph{Random or position-dependent step probabilities.} If the
  clockwise probability at each site is drawn at random, the problem
  becomes a random walk in a random environment, surveyed by Zeitouni
  \cite{zeitouni2004part}. If instead it depends deterministically on
  position rather than time, gambler's ruin no longer applies directly,
  though the problem reduces to absorption probabilities of an
  inhomogeneous chain on the cycle, solvable by standard
  linear-algebraic methods.
\item
  \emph{Continuous-time walks.} Replacing the discrete-time walk by a
  continuous-time chain with exponential holding times leaves the
  embedded sequence of visited vertices, and hence the distribution of
  the last-coverage \emph{vertex}, unchanged. What remains open is the
  distribution of the coverage \emph{time}, which does depend on the
  holding-time distribution.
\item
  \emph{Optimal patrol schedule design.} Corollary~\ref{corollary-4.5}
  approximates $P(L=k)$ for any deterministic schedule. Finding the
  schedule that minimises expected cover time, or that minimises
  $\max_k P(L=k)$ so as to make coverage as uniform as possible, is a
  non-trivial optimisation over the space of step-probability sequences.
\item
  \emph{Concentration of the last-coverage vertex.} As the ring grows
  large, one may ask whether $L$ concentrates at a particular location
  and how large $P(L=k)$ can be for the most likely vertex. Answering
  this would require large deviation estimates and would connect the
  present work to cover times and last-passage percolation.
\end{itemize}

\subsection{Conclusion}\label{conclusion}

This paper examines the distribution of the last-coverage vertex in a
nearest-neighbour random walk on the cycle graph $C_n$. Its central
analytical contribution is the interval-expansion framework, built on
Theorem~\ref{Theorem-3.1}, which shows that the visited region always
forms a contiguous arc and can therefore be tracked by two boundary
counters rather than by the full trajectory, and
Theorem~\ref{theorem-3.6}, which shows that the last-coverage
probabilities can be computed exactly by summing products of gambler's
ruin exit probabilities over the relevant expansion sequences.

Three quantitative results follow. Theorem~\ref{Theorem-4.1} gives an
explicit closed form for the biased walk, obtained by representing the
boundary expansions as a two-state time-inhomogeneous Markov chain, and
shows that a fixed directional preference concentrates the distribution
on the vertex just beyond the dominant direction of travel.
Corollary~\ref{Corollary-4.3} recovers the uniform distribution
$1/(n-1)$ for the unbiased walk as the $p \to 1/2$ limit of that
formula, justified by Lemma~\ref{Lemma-4.2}; the uniform distribution is
exact, not an approximation, even though it is obtained as a limit.
Corollary~\ref{corollary-4.5} gives an exact series representation for
any deterministic time-varying schedule, with Algorithm~1 supplying a
finitely truncated approximation with an explicit error bound.
Collectively these establish the interval-expansion approach as a
unified, exact framework for last-coverage analysis on cycle graphs, and
connect foundational probability theory to practical questions in
network design, robotic patrol, and information propagation.

\section*{Disclaimer}

The views expressed in this article are those of the authors and do not reflect the official policy or position of the U.S. Naval Academy, Department of the Navy, the Department of War, or the U.S. Government.

\bibliographystyle{siamplain}
\bibliography{references}

\end{document}